\documentclass[review]{elsarticle}
\makeatletter
\def\ps@pprintTitle{%
	\let\@oddhead\@empty
	\let\@evenhead\@empty
	\def\@oddfoot{\centerline{\thepage}}%
	\let\@evenfoot\@oddfoot}
\makeatother
\usepackage{lineno,hyperref}
\modulolinenumbers[5]

\usepackage{amsmath,amssymb,epsfig,amsthm,bm}
\usepackage{a4wide,xcolor,eucal,enumerate,mathrsfs,graphics,caption,subfig,booktabs,verbatim}
\usepackage{dsfont,natbib}

\bibliographystyle{elsarticle-harv}
\biboptions{authoryear}
\journal{}

\theoremstyle{definition}
\newtheorem{de}{Definition}[section]

\theoremstyle{plain}
\newtheorem{theo}[de]{Theorem}
\newtheorem{lemma}[de]{Lemma}

\theoremstyle{remark}

\newcommand{\R}{\mathbb{R}}
\newcommand{\Z}{\mathbb{Z}}
\newcommand{\p}{\mathbb{P}}

\newcommand{\F}{\mathcal{F}}

\def\argmin{\mathop{\text{argmin}}}

\begin{document}
	
	\begin{frontmatter}
		
		\title{Smooth estimation of a monotone hazard and a monotone density under random censoring}
		
		
		\author[mymainaddress]{Hendrik P.~Lopuha\"a}
		
		\author[mymainaddress]{Eni Musta\corref{mycorrespondingauthor}}
		\cortext[mycorrespondingauthor]{Corresponding author}
		\ead{e.musta@tudelft.nl}
		
		\address[mymainaddress]{DIAM, Faculty EEMCS, Delft University of Technology, Mekelweg 4, 2628 CD Delft, The Netherlands}

\begin{abstract}
We consider kernel smoothed Grenander-type estimators for a monotone hazard rate
and a monotone density in the presence of randomly right censored data.
We show that they converge at rate $n^{2/5}$ and that the limit distribution at a fixed point is Gaussian with explicitly given mean and variance.
It is well-known that standard kernel smoothing leads to inconsistency problems at the boundary points.
It turns out that, also by using a boundary correction, we can only establish uniform consistency on intervals that stay away from the end point of the support (though we can go arbitrarily close to the right boundary).
\end{abstract}

\begin{keyword}
isotonic estimation
\sep
 hazard rate
 \sep
  smoothing
  \sep
   asymptotics
   \sep
    right censoring
    \sep
     Grenander estimator	
	\end{keyword}
\end{frontmatter}
\section{Introduction}
Nonparametric estimation under shape constraints is currently a very active research area in statistics.
A frequently encountered problem in this field  is the estimation of the hazard rate, which, in survival analysis, is defined as the probability that an individual will experience an event within a small time interval given that the subject has survived until the beginning  of this interval. In this context, monotonicity constraints arise naturally reflecting the property of aging or becoming more reliable as the survival time increases.
Beside this, it might be also of interest to characterize the distribution of the event times in terms of the density assuming that it is monotone.

Popular estimators, such as the nonparametric maximum likelihood  estimator (NPMLE) or
Grenander type estimators, are typically piecewise constant and converge at rate~$n^{1/3}$.
However, at the price of additional smoothness assumptions on the hazard or density function, the cube-root-$n$ rate of convergence can be improved. 
Smooth estimation has received considerable attention in the literature, because it is needed to prove that a bootstrap method works
(see for instance, \citet{kos2008}; \citet{SBW}).
Moreover, it provides a straightforward estimate of the derivative of the function of interest, which is of help when constructing confidence intervals (see for instance,  \citet{nane15}).
{\citet{HM88} is one of the earliest papers which combines  smoothness and isotonization for nonparametric regression procedures.} Various approaches can be used to obtain smooth shape constrained estimators.
It essentially depends on the methods of both isotonization and smoothing and on the order of operations (see for instance, \citet{mammen1991} or \citet{GJW10}). 
Chapter~8 in \citet{GJ14} gives an overview of such methods.

In this paper, we focus on kernel smoothed Grenander-type estimator (SG)
of the hazard function and the probability density in the presence of randomly right censored data.
\citet{huang-wellner1995} consider the random censorship model and
Grenander estimators of a monotone hazard and density,
obtained by taking slopes of the greatest convex minorant (lowest concave majorant) of the Nelson-Aalen
or Kaplan-Meier estimator.
Consistency and asymptotic distribution are established, together with the asymptotic equivalence with the maximum likelihood estimator. 
The same model and the $L_p$-error of this type of estimators was investigated in~\citet{durot2007}, { while pointwise confidence intervals for a monotone hazard rate via inversion of the likelihood ratio statistic were proposed in \citet{MB08}.}
However, these papers do not take in consideration smoothing options. On the other hand, the kernel smoothed Grenander-type estimator of a monotone hazard in the context of the Cox model, which is a generalization of the right censoring that takes into account covariates, was introduced  in \citet{Nane}, but without further development of its asymptotic distribution. However, on the basis of Theorem 3.1  in \citet{GJ13}, where no censoring takes place, our main result (Theorem~\ref{theo:distr}) was conjectured by \citet{Nane}. Afterwards, Theorem 11.8 in \citet{GJ14} states the limit distribution of the smoothed maximum likelihood estimator (SMLE) of a monotone hazard function using a more delicate argument.
Hence, it seems quite natural to address the problem of the smoothed Grenander-type estimator.
The present paper, aims at giving a rather short and direct proof of its limit distribution, relying on the method developed in \citet{GJ13} together with a Kiefer-Wolfowitz type of result derived in~\citet{DL14}.
Both Theorem~\ref{theo:distr} and Theorem~\ref{theo:distr-dens}, highlight the fact that also after applying smoothing techniques, the NPMLE and the Grenander estimator remain asymptotically equivalent.

Furthermore, we study inconsistency problems at the boundaries of the support.
In order to prevent those, different approaches have been tried, including penalization (see for instance, \citet{GJ13}) and boundary corrections
(see for instance, \citet{Albers}).
However, no method performs strictly better than the others.
We choose to use boundary kernels, but we discover that still the inconsistency at the right boundary can not be avoided.
The main reason for this is that a bound on the distance between the cumulative hazard (cumulative distribution) function and the Nelson-Aalen
(Kaplan-Meier) estimator is only available on intervals strictly smaller than the end point of the support.

The paper is organized as follows.
In Section~\ref{sec:model} we briefly introduce the Grenander estimator in the random censorship model and recall some results needed in the sequel.
The smoothed estimator of a monotone hazard function is described in Section~\ref{sec:asymptotics} and it is shown to be asymptotically normally distributed.
Moreover, a smooth estimator based on boundary kernels is studied and uniform consistency is derived.
Using the same approach, in Section~\ref{sec:dens} we deal with the problem of estimating a smooth monotone density function.
Section~\ref{sec:conf-int} is devoted to numerical results on pointwise confidence intervals.
Finally, we end with a short discussion on how these results relate to a more general picture.

\section{The random censorship model}
\label{sec:model}

Suppose we have an i.i.d.~sample $X_1,\dots,X_n$ with distribution function $F$ and density~$f$, representing the survival times.
Let $C_1,\dots,C_n$ be the i.i.d.~censoring variables with a distribution function $G$ and density~$g$.
Under the random censorship model, we assume that the survival time $X$ and the censoring time $C$ are independent and the observed data consists of
i.i.d.~pairs
of random variables $(T_1,\Delta_1),\dots,(T_n,\Delta_n)$, where $T$ denotes the follow-up time~$T=\min(X,C)$
and $\Delta=\mathds{1}_{\{X\leq C\}}$ is the censoring indicator.

Let $H$ and $H^{uc}$ denote the distribution function of the follow-up time and the sub-distribution function of the uncensored observations, respectively, i.e.,
$H^{uc}(x)=\p(T\leq x,\Delta=1)$.
Note that $H^{uc}(x)$ and $H(x)$ are differentiable with derivatives
\[
h^{uc}(x)=f(x)\left(1-G(x)\right)
\]
and
\[
h(x)=f(x)(1-G(x))+g(x)(1-F(x))
\]
respectively.
We also  assume that $\tau_H=\tau_G<\tau_F\leq\infty$, where $\tau_F,\,\tau_G$ and $\tau_H$ are the end points of the support of $F,\,G$ and $H$.

The hazard rate $\lambda$ is characterized by the following relation
\[
\lambda(t)=\frac{f(t)}{1-F(t)}
\]
and we refer to the quantity
\[
\Lambda(t)=\int_0^t\lambda(u)\,\mathrm{d}u,
\]
as the cumulative hazard function.
First, we aim at estimating $\lambda$, subject to the constraint that it is increasing (the case of a decreasing hazard is analogous), on the basis of $n$ observations $(T_1,\Delta_1),\dots,(T_n,\Delta_n).$
The Grenander-type estimator $\tilde{\lambda}_n$ of $\lambda$ is defined as the left-hand slope of the greatest convex minorant $\tilde{\Lambda}_n$ of the Nelson-Aalen estimator $\Lambda_n$
of the cumulative hazard function $\Lambda$, where
\[
\Lambda_n(t)=
\sum_{i=1}^n
\frac{\mathds{1}_{\{T_i\leq t\}}\Delta_i}{\sum_{j=1}^n \mathds{1}_{\{T_j\geq T_i\}}}.
\]
Figure~\ref{fig:NA} shows the Nelson-Aalen estimator and its greatest convex minorant for a sample of $n=500$ from
a Weibull distribution with shape parameter $3$ and scale parameter $1$ for the event times and the uniform distribution on~$(0,1.3)$ for the censoring times.
We consider only the data up to the last observed time before the $90\%$ quantile of $H$.
The resulting Grenander-type estimator can be seen in Figure~\ref{fig:gren}.
\begin{figure}[t]
\includegraphics[width=.7\textwidth]{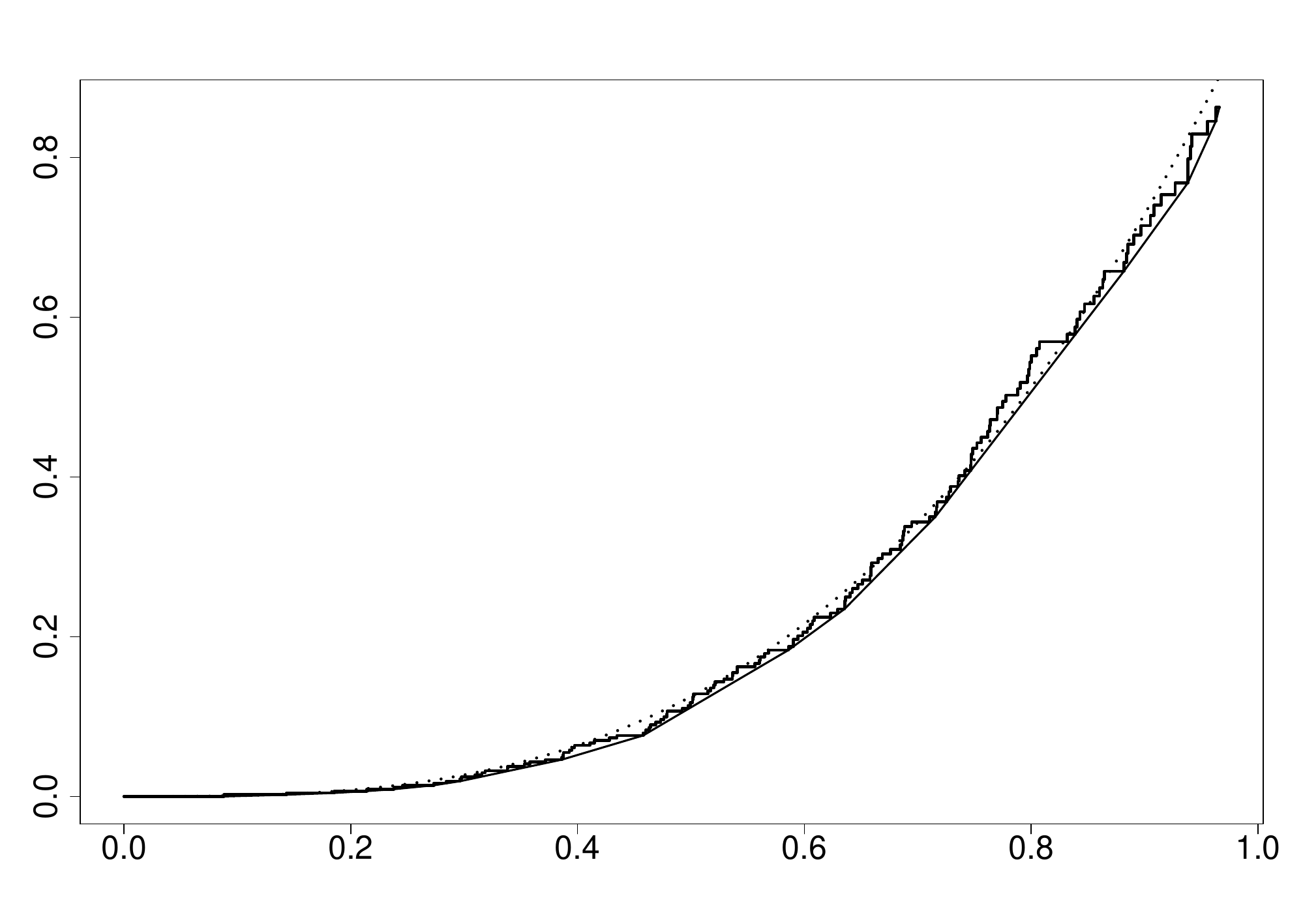}
\caption{The Nelson-Aalen estimator (piecewise constant solid line) of the cumulative hazard (dotted line) and its greatest convex minorant (solid line).}
\label{fig:NA}
\end{figure}%

In \citet{huang-wellner1995} it is shown that the Grenander estimator of a nondecreasing hazard rate satisfies the following pointwise consistency result
\begin{equation}
\label{eqn:consG}
\lambda(t-)\leq \liminf_{n\to \infty} \tilde{\lambda}_n(t)\leq \limsup_{n\to \infty} \tilde{\lambda}_n(t)\leq  \lambda(t+),
\end{equation}
with probability one and for all $0<t<\tau_H$, where $\lambda(t-)$ and $\lambda(t+)$ denote the left and right limit at $t$.
Moreover, we will also make use of the fact that for any $0<M<\tau_H$,
\begin{equation}
\label{eqn:chbound}
\sqrt{n}
\sup_{u\in[0,M]}
\left|\tilde{\Lambda}_n(u)-\Lambda(u)\right|
=
O_P(1),
\end{equation}
(see for instance, \citet{LopuhaaNane2013}, Theorem 5, in the case $\beta=0$, or \citet{VW96}, Example~3.9.19).

It becomes useful to introduce
\begin{equation}
\label{def:Phi}
\Phi(x)=
\int \mathds{1}_{[x,\infty)}(y)\,\mathrm{d}\p(y,\delta)=1-H(x)
\end{equation}
and
\begin{equation}
\label{def:Phin}
\Phi_n(x)=\int \mathds{1}_{[x,\infty)}(y)\,\mathrm{d}\p_n(y,\delta),
\end{equation}
where $\p$ is the probability distribution of $(T,\Delta)$ and $\p_n$ is the empirical measure of the pairs
$(T_i,\Delta_i)$, $i=1,\dots,n$. From Lemma~4 in~\citet{LopuhaaNane2013} we have,
\begin{equation}
\label{eqn:phi}
\sup_{x\in [0,\tau_H]}
|\Phi_n(x)-\Phi(x)|
\to0, \text{ a.s., and }
\sqrt{n}\sup_{x\in[0,\tau_H]}|\Phi_n(x)-\Phi(x)|=O_P(1).
\end{equation}
Let us notice that, with these notations, we can also write
\begin{equation}
\label{eqn:cum-haz}
\Lambda_n(t)
=
\int\frac{\delta\mathds{1}_{\{u\leq t\}}}{\Phi_n(u)}\,\mathrm{d}\p_n(u,\delta),
\qquad
\Lambda(t)
=
\int\frac{\delta\mathds{1}_{\{u\leq t\}}}{\Phi(u)}\,\mathrm{d}\p(u,\delta).
\end{equation}
Our second objective is to estimate a monotone (e.g., increasing) density function $f$ .
In this case the Grenander-type estimator $\tilde{f}_n$ of $f$ is defined as the left-hand slope of the greatest convex minorant $\tilde{F}_n$ of the Kaplan-Meier estimator $F_n$ of the cumulative distribution function~$F$.
Pointwise consistency of  the Grenander estimator of a nondecreasing density:
\begin{equation}
\label{eqn:consGdens}
f(t-)\leq \liminf_{n\to \infty} \tilde{f}_n(t)\leq \limsup_{n\to \infty} \tilde{f}_n(t)\leq  f(t+),
\end{equation}
with probability one, for all $0<t<\tau_H$, where $f(t-)$ and $f(t+)$ denote the left and right limit at $t$, is proved in \citet{huang-wellner1995}.
Moreover, for any $0<M<\tau_H$, it holds
\begin{equation}
\label{eqn:cdbound}
\sqrt{n}
\sup_{u\in[0,M]}
\left|\tilde{F}_n(u)-F(u)\right|
=
O_P(1),
\end{equation}
(see for instance, \citet{BC74}, Theorem 5). By Theorem 2 in~\citet{MR88}, for each $0<M<\tau_H$ and $x\geq 0$, we have the following strong approximation
\begin{equation}
\label{eqn:approx}
\p\left\{\sup_{t\in[0,M]}n\left|F_n(t)-F(t)-n^{-1/2}(1-F(t))W\circ L(t)\right|>x+K_1\log n\right\}\leq K_2\mathrm{e}^{-K_3x},
\end{equation}
where $K_1,\,K_2,\,K_3$ are positive constants, $W$ is a Brownian motion and
\begin{equation}
\label{eqn:L}
L(t)=\int_0^t \frac{\lambda(u)}{1-H(u)}\,\mathrm{d}u.
\end{equation}
\section{Smoothed Grenander-type estimator of a monotone hazard}
\label{sec:asymptotics}
Next, we introduce the smoothed Grenander-type estimator $\tilde{\lambda}_n^{SG}$ of an increasing hazard.
Kernel smoothing is a rather simple and broadly used method.
Let $k$ be a standard kernel, i.e.,
\begin{equation}
\label{def:kernel}
\text{$k$ is a symmetric probability density with support $[-1,1]$.}
\end{equation}
We will consider the scaled version
\[
k_b(u)=\frac{1}{b}k\left(\frac{u}{b}\right)
\]
of the kernel function $k$, where $b=b_n$ is a bandwidth that depends on the sample size, such that
\begin{equation}
\label{eqn:band}
0<b_n\to 0\quad\text{and}\quad nb_n\to\infty, \quad\text{as }n\to\infty.
\end{equation}
From now on we will use the notation $b$ instead of $b_n$.

For a fixed $x\in[0,\tau_H]$, the smoothed Grenander-type estimator $\tilde{\lambda}_n^{SG}$ is defined by
\begin{equation}
\label{eqn:SG}
\tilde{\lambda}_n^{SG}(x)=\int_{(x-b)\vee 0}^{(x+b)\wedge\tau_H} k_b(x-u)\,\tilde{\lambda}_n(u)\,\mathrm{d}u=\int k_b(x-u)\,\mathrm{d}\tilde{\Lambda}_n(u).
\end{equation}
Figure~\ref{fig:gren} shows the Grenander-type estimator together with the kernel smoothed version for the same sample as in Figure~\ref{fig:NA}.
We used the triweight kernel function
\[
k(u)=\frac{35}{32}(1-u^2)^3\mathds{1}_{\{|u|\leq 1\}}
\]
and the bandwidth $b=c_{opt}\,n^{-1/5}$, where $c_{opt}$ is the asymptotically MSE-optimal constant (see \eqref{eqn:c_opt}) calculated in the point $x_0=0.5$. {Actually, the choice of the kernel function does not seem to effect the results.}
\begin{figure}[t]
\includegraphics[width=.7\textwidth]{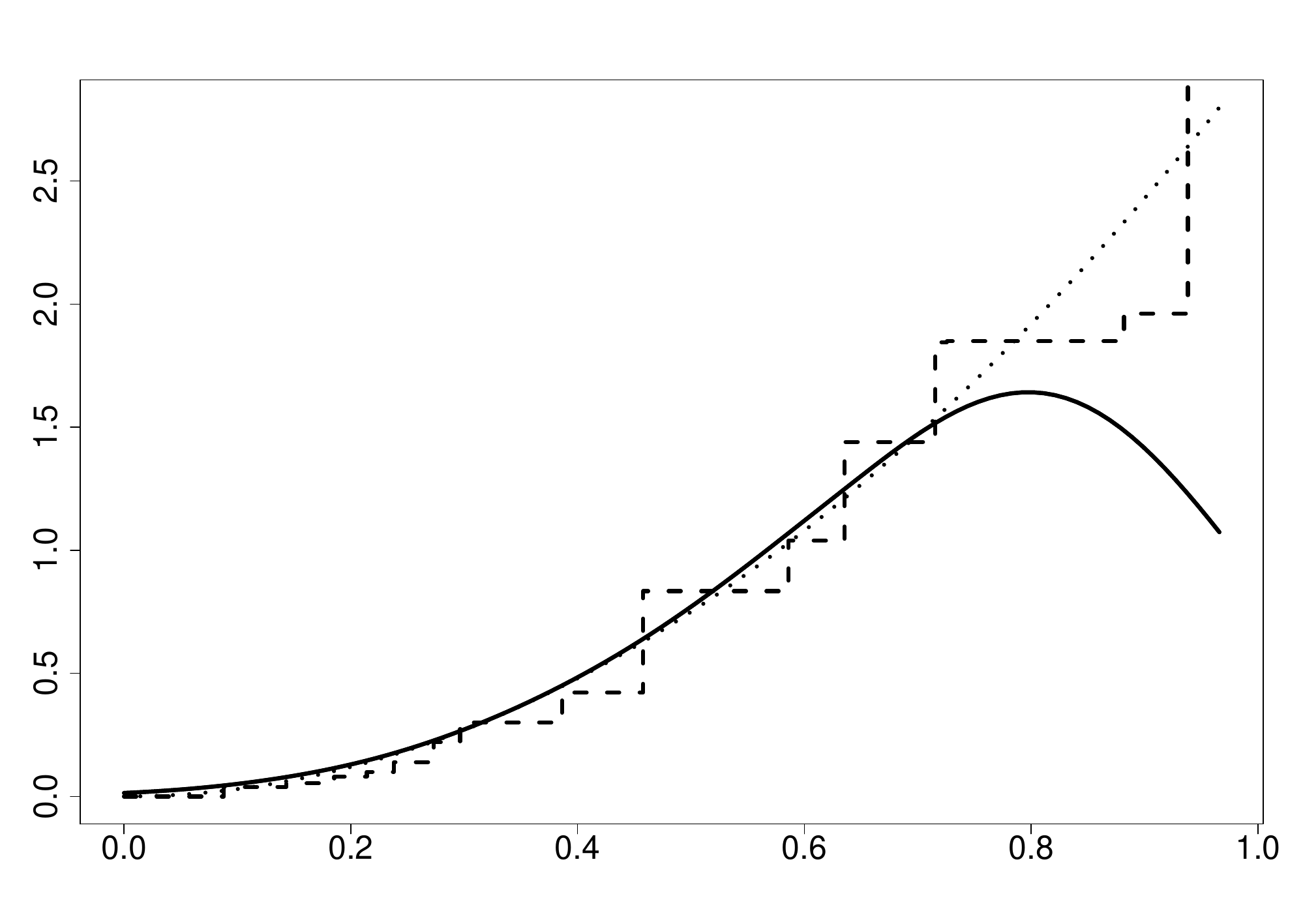}
\caption{The Grenander-type estimator (dashed line) of the hazard rate (dotted line) and the kernel smoothed version (solid line).}
\label{fig:gren}
\end{figure}%

The following result is rather standard when dealing with kernel smoothed isotonic estimators
(see for instance, \citet{Nane},Chapter 5).
For completeness, we provide a rigorous proof.
\begin{theo}
\label{theo:cons}
Let $k$ be a kernel function satisfying~\eqref{def:kernel} and let
$\tilde{\lambda}^{SG}$ be the smoothed Grenander-type estimator defined in~\eqref{eqn:SG}. Suppose that the hazard function $\lambda$ is nondecreasing and continuous. Then for each $0<\epsilon<\tau_H$, it holds
\[
\sup_{x\in[\epsilon,\tau_H-\epsilon]}|\tilde{\lambda}_n^{SG}(x)-\lambda(x)|\to 0
\]
with probability one.
\end{theo}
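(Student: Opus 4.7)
The plan is to reduce the claim to uniform consistency of the unsmoothed Grenander estimator $\tilde\lambda_n$ on a compact subinterval of $(0,\tau_H)$, after which a standard kernel approximation handles the smoothing.

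First, I would upgrade the pointwise consistency~\eqref{eqn:consG} to uniform a.s.\ consistency on compact subsets of $(0,\tau_H)$. Continuity of $\lambda$ turns~\eqref{eqn:consG} into $\tilde\lambda_n(t)\to\lambda(t)$ a.s.\ for every $t\in(0,\tau_H)$. Applying a Dini-type argument (partition a compact $K\subset(0,\tau_H)$ with a mesh finer than a modulus of continuity of $\lambda$, use pointwise a.s.\ convergence at the finitely many nodes, and sandwich intermediate values by monotonicity of both $\tilde\lambda_n$ and $\lambda$), one obtains
\[
\sup_{u\in K}\bigl|\tilde\lambda_n(u)-\lambda(u)\bigr|\longrightarrow 0\quad\text{a.s.}
\]
A countable dense subset of $K$ handles the null-set issue uniformly in $n$.

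Second, for fixed $\epsilon>0$ I would restrict to $n$ large enough that $b<\epsilon/2$. Then for every $x\in[\epsilon,\tau_H-\epsilon]$ no truncation occurs in~\eqref{eqn:SG}, and the change of variables $u=x-bv$ gives $\tilde\lambda_n^{SG}(x)=\int_{-1}^{1}k(v)\,\tilde\lambda_n(x-bv)\,\mathrm{d}v$. Using $\int_{-1}^{1}k(v)\,\mathrm{d}v=1$ I would decompose
\[
\tilde\lambda_n^{SG}(x)-\lambda(x)
=\int_{-1}^{1}k(v)\bigl[\tilde\lambda_n(x-bv)-\lambda(x-bv)\bigr]\,\mathrm{d}v
+\int_{-1}^{1}k(v)\bigl[\lambda(x-bv)-\lambda(x)\bigr]\,\mathrm{d}v.
\]
The first integral is bounded in absolute value by $\sup_{u\in[\epsilon/2,\tau_H-\epsilon/2]}|\tilde\lambda_n(u)-\lambda(u)|$, which vanishes a.s.\ by the first step applied to the compact interval $K=[\epsilon/2,\tau_H-\epsilon/2]$. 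The second integral is deterministic and bounded by $\sup\{|\lambda(y)-\lambda(x)|:x\in[\epsilon,\tau_H-\epsilon],\,|y-x|\leq b\}$, which tends to zero by uniform continuity of $\lambda$ on $[\epsilon/2,\tau_H-\epsilon/2]$.

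There is no deep obstacle here: the result is essentially a packaging of two classical facts. The step that requires a moment of care is the Dini-type upgrade, which relies crucially on both the monotonicity of $\tilde\lambda_n$ and the assumed continuity of $\lambda$; without continuity of $\lambda$, \eqref{eqn:consG} yields only sandwiching by left and right limits, and pointwise a.s.\ convergence would fail at discontinuity points, so uniform a.s.\ convergence on compacts would no longer be automatic.
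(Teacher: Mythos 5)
Your proof is correct and follows essentially the same route as the paper: the same decomposition into a stochastic term $\int k(v)[\tilde\lambda_n(x-bv)-\lambda(x-bv)]\,\mathrm{d}v$ plus a deterministic bias term, the same reliance on~\eqref{eqn:consG} combined with monotonicity, and a Dini-type argument to pass from pointwise to uniform convergence. The only (harmless) difference is the order of operations: you first upgrade~\eqref{eqn:consG} to uniform a.s.\ consistency of the unsmoothed $\tilde\lambda_n$ on compacts and then smooth, whereas the paper first proves pointwise a.s.\ consistency of $\tilde\lambda_n^{SG}(x)$ by sandwiching with $\tilde\lambda_n(x\pm\delta)$ and only at the very end invokes the monotone-functions-converging-to-a-continuous-limit argument, applied to the smoothed estimator itself.
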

\begin{proof}
First, note that for a fixed $x\in(0,\tau_H)$ and sufficiently large  $n$, we have $0<x-b< x+b<\tau_H.$
We start by writing
\[
\tilde{\lambda}_n^{SG}(x)-\lambda(x)
=
\tilde{\lambda}_n^{SG}(x)-\tilde{\lambda}_n^s(x)
+
\tilde{\lambda}_n^{s}(x)-\lambda(x),
\]
where $\tilde{\lambda}^s_n(x)=\int k_b(x-u)\,\lambda(u)\,\mathrm{d}u$.
Then, a change of variable yields
\[
\tilde{\lambda}^{s}_n(x)-\lambda(x)
=
\int_{-1}^1 k(y)
\left\{\lambda(x-by)-\lambda(x)\right\}\,\mathrm{d}y.
\]
Using the continuity of $\lambda$ and applying the dominated convergence theorem, we obtain that,
for each $x\in (0,\tau_H)$,
\begin{equation}
\label{eqn:partcons}
\tilde{\lambda}^s_n(x)\to\lambda(x),
\quad
\text{as }n\to\infty.
\end{equation}
On the other hand,
\[
\tilde{\lambda}^{SG}_n(x)-\tilde{\lambda}^s_n(x)
=
\int_{-1}^1 k(y)
\left\{
\tilde{\lambda}_n(x-by)-\lambda(x-by)
\right\}\,\mathrm{d}y.
\]
Choose $\epsilon>0$.
Then by continuity of $\lambda$, we can find $\delta>0$,
such that $0<x-\delta<x+\delta<\tau_H$ and $|\lambda(x+\delta)-\lambda(x-\delta)|<\epsilon$.
Then, there exists $N$ such that, for all $n\geq N$ and for all $y\in[-1,1]$, it holds $|by|<\delta$.
Hence, by the monotonicity of the hazard rate, we get
\[
\tilde{\lambda}_n(x-\delta)-\lambda(x+\delta)\leq \tilde{\lambda}_n(x-by)-\lambda(x-by)\leq \tilde{\lambda}_n(x+\delta)-\lambda(x-\delta).
\]
It follows from~\eqref{eqn:consG} and~\eqref{def:kernel} that
\[
-\epsilon
\leq
\liminf_{n\to\infty}
\tilde{\lambda}^{SG}_n(x)-\tilde{\lambda}^s_n(x)
\leq
\limsup_{n\to\infty}
\tilde{\lambda}^{SG}_n(x)-\tilde{\lambda}^s_n(x)
\leq
\epsilon,
\]
with probability one.
Since $\epsilon>0$ is arbitrary, together with~\eqref{eqn:partcons}, this proves the strong pointwise consistency
at each fixed $x\in(0,\tau_H)$.
Finally, uniform consistency in $[\epsilon,\tau_H-\epsilon]$ follows from the fact that we have a sequence of monotone functions converging pointwise to a continuous, monotone function on a compact interval.
\end{proof}
It is worth noticing that, if one is willing to assume that $\lambda$ is twice differentiable with uniformly bounded first and second derivatives,
and that $k$ is differentiable with a bounded derivative, then we get a more precise result on the order of convergence
\[
\sup_{x\in[\epsilon,\tau_H-\epsilon]}|\tilde{\lambda}_n^{SG}(x)-\lambda(x)|=O_P(b^{-1}n^{-1/2}+b^2).
\]
Such extra assumptions are considered in Theorem 5.2 in \citet{Nane} for the Cox model and the right censoring model is just a particular case with regression parameters $\beta=0$. Furthermore, in a similar way, it can be proved that also the estimator for the derivative of the hazard is uniformly consistent in $[\epsilon,\tau_H-\epsilon]$, provided that $\lambda$ is continuously differentiable and the kernel is differentiable with bounded derivative.

The pointwise asymptotic normality of the smoothed Grenander estimator is stated in the next theorem.
Its proof is inspired by the approach used in \citet{GJ13}.
The key step consists in using a Kiefer-Wolfowitz type of result for the Nelson-Aalen estimator,
which has recently been obtained by~\citet{DL14}.
\begin{theo}
\label{theo:distr}
Let $\lambda$ be a nondecreasing and twice continuously differentiable hazard such that $\lambda$ and~$\lambda'$ are strictly positive.
Let $k$ satisfy~\eqref{def:kernel} and suppose that it is
differentiable with a uniformly bounded derivative.
If $bn^{1/5}\to c\in(0,\infty)$, then for each $x\in(0,\tau_h)$,
\[
n^{2/5}
\big(
\tilde{\lambda}_n^{SG}(x)-\lambda(x)
\big)
\xrightarrow{d}
N(\mu,\sigma^2),
\]
where
\begin{equation}
\label{eqn:mu-sigma}
\mu=\frac{1}{2}c^2\lambda''(x)\int u^2k(u)\,\mathrm{d}u
\quad\text{ and }\quad
\sigma^2=
\frac{\lambda(x)}{c\,(1-H(x))}\int k^2(u)\,\mathrm{d}u.
\end{equation}
For a fixed $x\in(0,\tau_h)$, the asymptotically MSE-optimal bandwidth $b$ for $\tilde{\lambda}^{SG}$ is given by
$c_{opt}(x)n^{-1/5}$, where
\begin{equation}
\label{eqn:c_opt}
c_{opt}(x)=\left\{\lambda(x)\int k^2(u)\,\mathrm{d}u \right\}^{1/5}\left\{(1-H(x))\lambda''(x)^2 \left(\int u^2\,k(u)\,\mathrm{d}u\right)^2\right\}^{-1/5}.
\end{equation}
\end{theo}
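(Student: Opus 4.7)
The plan is to adapt the approach of \citet{GJ13}, replacing the isotonized cumulative hazard $\tilde{\Lambda}_n$ by the Nelson-Aalen estimator $\Lambda_n$ via the Kiefer-Wolfowitz type bound of \citet{DL14}, and then applying the Lindeberg CLT to the resulting linear functional of the empirical process. For $n$ large enough that $[x-b,x+b]\subset(0,\tau_H)$, I would start from the decomposition
\[
\tilde{\lambda}_n^{SG}(x)-\lambda(x) = I_n + II_n + III_n,
\]
where $I_n=\int k_b(x-u)\,\mathrm{d}(\tilde{\Lambda}_n-\Lambda_n)(u)$, $II_n=\int k_b(x-u)\,\mathrm{d}(\Lambda_n-\Lambda)(u)$, and $III_n=\int k_b(x-u)\lambda(u)\,\mathrm{d}u-\lambda(x)$.

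The deterministic bias $III_n$ is standard: a Taylor expansion around $x$, together with the symmetry of $k$ and continuity of $\lambda''$, gives $III_n=\tfrac{1}{2}b^2\lambda''(x)\int u^2k(u)\,\mathrm{d}u+o(b^2)$, so that $n^{2/5}III_n\to\mu$ under $bn^{1/5}\to c$. For $I_n$, I would integrate by parts to obtain $\int k_b'(x-u)(\tilde{\Lambda}_n-\Lambda_n)(u)\,\mathrm{d}u$ plus bounded boundary contributions, and then invoke the Kiefer-Wolfowitz estimate from \citet{DL14}, which gives $\sup_u|\tilde{\Lambda}_n(u)-\Lambda_n(u)|=O_P((n^{-1}\log n)^{2/3})$ on compact subsets of $[0,\tau_H)$. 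Combined with $\int|k_b'|\,\mathrm{d}u=O(b^{-1})=O(n^{1/5})$, this forces $I_n=O_P(n^{-7/15}(\log n)^{2/3})=o_P(n^{-2/5})$.

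The main term is $II_n$. Using the representation \eqref{eqn:cum-haz}, I would write
\[
II_n=\int k_b(x-u)\,\frac{\delta}{\Phi(u)}\,\mathrm{d}(\p_n-\p)(u,\delta)+R_n,
\]
where the remainder $R_n$ collects the substitution $\Phi_n^{-1}\to\Phi^{-1}$; by \eqref{eqn:phi} and the fact that $\Phi$ is bounded away from $0$ in a neighborhood of $x$, one obtains $R_n=O_P(n^{-1/2})=o_P(n^{-2/5})$. The leading piece is a centered sample mean of i.i.d.\ variables to which I would apply Lindeberg's CLT. The key computation is that, using the identity $h^{uc}(t)/\Phi(t)^2=\lambda(t)/\Phi(t)$ and the change of variables $u=x-by$, the variance is asymptotically $(nb)^{-1}\lambda(x)(1-H(x))^{-1}\int k^2(y)\,\mathrm{d}y$; Lindeberg's condition is immediate because $|k_b|=O(n^{1/5})$ while the variance is of the larger order $O(n^{-4/5})$. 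Hence $n^{2/5}II_n\xrightarrow{d}N(0,\sigma^2)$, and Slutsky's lemma combines the three pieces. The optimal bandwidth $c_{opt}$ follows by minimizing $\mu^2+\sigma^2$ in $c$.

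The main obstacle is the control of $I_n$: it is crucial that the Kiefer-Wolfowitz rate from \citet{DL14} is fast enough to beat the kernel loss $b^{-1}=O(n^{1/5})$ by a margin $o(n^{-2/5})$. The rate $(n^{-1}\log n)^{2/3}$ achieves exactly this, and it is precisely this recently obtained sharper bound that allows the direct strategy of \citet{GJ13} to carry over cleanly to the random censoring setting.
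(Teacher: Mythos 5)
Your proposal is correct and follows the paper's proof almost exactly in its skeleton: the same three-term decomposition of $\tilde{\lambda}_n^{SG}(x)-\lambda(x)$, the same Taylor-expansion treatment of the bias, the same integration-by-parts plus the Kiefer--Wolfowitz bound $\sup_u|\tilde{\Lambda}_n(u)-\Lambda_n(u)|=O_P(n^{-2/3}(\log n)^{2/3})$ from \citet{DL14} for the isotonization term (your rate bookkeeping $n^{1/5}\cdot n^{-2/3}=n^{-7/15}=o(n^{-2/5})$ matches the paper's), and the same reduction of $II_n$ to the linear statistic $\int k_b(x-u)\,\delta\,\Phi(u)^{-1}\,\mathrm{d}(\p_n-\p)$ with the $\Phi_n^{-1}\to\Phi^{-1}$ substitution controlled via \eqref{eqn:phi}.

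The one genuine difference is the final limit step. You apply the Lindeberg CLT directly to the triangular array $n^{-1}\sum_i k_b(x-T_i)\Delta_i/\Phi(T_i)$, computing the asymptotic variance $(nb)^{-1}\lambda(x)(1-H(x))^{-1}\int k^2$ from the identity $h^{uc}/\Phi^2=\lambda/(1-H)$ and checking Lindeberg trivially from the $O(b^{-1})$ bound on the summands. The paper instead rescales to the local process $\hat{W}_n(y)=\sqrt{n/b}\,\{\Lambda_n(x-by)-\Lambda_n(x)-\Lambda(x-by)+\Lambda(x)\}$ and invokes Theorem~2.11.23 of \citet{VW96} (bracketing-entropy functional CLT) to get weak convergence of $\hat{W}_n$ to $\sqrt{\lambda(x)/(1-H(x))}\,W$ as a process on $[-1,1]$, and then writes $n^{2/5}II_n=(bn^{1/5})^{-1/2}\int k\,\mathrm{d}\hat{W}_n$. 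Your route is more elementary and entirely sufficient for the pointwise statement in the theorem; the paper's route costs the verification of envelope, equicontinuity and entropy conditions but delivers the stronger process-level convergence, which is the form of the argument in \citet{GJ13} that the paper is deliberately mirroring. Two minor points worth tightening in a write-up: the boundary terms in the integration by parts for $I_n$ are of order $b^{-1}\sup|\tilde{\Lambda}_n-\Lambda_n|$ (not merely ``bounded''), which is the same order as the integral term and hence still negligible; and in the remainder $R_n$ you should note that $1/\Phi_n(M)=O_P(1)$ (from \eqref{eqn:phi}) and that $\int k_b(x-u)\delta\,\mathrm{d}\p_n=O_P(1)$ before concluding $R_n=O_P(n^{-1/2})$.
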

\begin{proof}
Once again we fix $x\in(0,\tau_H)$. Then, for sufficiently large  $n$, we have $0<x-b< x+b\leq M<\tau_H$.
We write
\begin{equation}
\label{eqn:distr1}
\begin{split}
\tilde{\lambda}^{SG}_n(x)
&
=\int k_b(x-u)\,\mathrm{d}\Lambda(u)+\int k_b(x-u)\,\mathrm{d}(\Lambda_n-\Lambda)(u)\\
&\quad+\int k_b(x-u)\,\mathrm{d}(\tilde{\Lambda}_n-\Lambda_n)(u).
\end{split}
\end{equation}
The first (deterministic) term on the right hand side of~\eqref{eqn:distr1} gives us
the asymptotic bias by using a change of variables, a Taylor expansion, and the properties of the kernel:
\[
\begin{split}
n^{2/5}
\left\{
\int_{x-b}^{x+b} k_b(x-u)\,\lambda(u)\,\mathrm{d}u-\lambda(x)
\right\}
&=
n^{2/5}
\int_{-1}^1 k(y)\,
\left\{
\lambda(x-by)-\lambda(x)
\right\}\,\mathrm{d}y\\
&=
n^{2/5}
\int_{-1}^1 k(y)
\left\{
-\lambda'(x)by+\frac12\lambda''(\xi_n)b^2y^2
\right\}\,\mathrm{d}y,\\
&\to
\frac{1}{2}\,c^2\,\lambda''(x)
\int_{-1}^1 y^2\,k(y)\,\mathrm{d}y.
\end{split}
\]
where $|\xi_n-x|<b|y|\leq b\to0$.
On the other hand, the last term on the right hand side of~\eqref{eqn:distr1} converges to $0$ in probability.
Indeed, integration by parts formula enables us to write
\[
\begin{split}
n^{2/5}\int_{x-b}^{x+b}k_b(x-u)\,\mathrm{d}(\tilde{\Lambda}_n-\Lambda_n)(u)
&=
n^{2/5}
\int_{x-b}^{x+b}
\left\{
\tilde{\Lambda}_n(u)-\Lambda_n(u)
\right\}
\frac{1}{b^2}\,k'\left(\frac{x-u}{b}\right)\,\mathrm{d}u\\
&=
\frac{n^{2/5}}{b}\int_{-1}^1
\left\{
\tilde{\Lambda}_n(x-by)-\Lambda_n(x-by)
\right\}
k'(y)\,\mathrm{d}y,
\end{split}
\]
and then we use $\sup_{t\in[0,\tau_H]}|\tilde{\Lambda}_n(t)-\Lambda_n(t)|=O_p(n^{-2/3}(\log n)^{2/3})$ (see \citet{DL14}, Corollary 3.4) together with the boundedness of $k'$.

What remains is to prove that
\[
n^{2/5}\,\int k_b(x-u)\,\mathrm{d}(\Lambda_n-\Lambda)(u)\xrightarrow{d} N(0,\sigma^2),
\]
where $\sigma^2$ is defined in~\eqref{eqn:mu-sigma}. Let us start by writing
\[
n^{2/5}\int_{x-b}^{x+b} k_b(x-u)\,\mathrm{d}(\Lambda_n-\Lambda)(u)=\frac{1}{\sqrt{bn^{1/5}}}\int_{-1}^1 k(y)\,\mathrm{d}\hat{W}_n(y),
\]
where, for each $y\in[-1,1]$, we define
\begin{equation}
\label{eqn:distr2}
\begin{split}
\hat{W}_n(y)
&=
\sqrt{\frac{n}{b}}
\left\{
\Lambda_n(x-by)-\Lambda_n(x)-\Lambda(x-by)+\Lambda(x)
\right\}\\
&=
\sqrt{\frac{n}{b}}
\int
\frac{\delta}{\Phi_n(u)}\left\{\mathds{1}_{[0,x-by]}(u)-\mathds{1}_{[0,x]}(u)\right\}\,\mathrm{d}\p_n(u,\delta)\\
&\qquad-
\sqrt{\frac{n}{b}}
\int\frac{\delta}{\Phi(u)}\left\{\mathds{1}_{[0,x-by]}(u)-\mathds{1}_{[0,x]}(u)\right\}\,\mathrm{d}\p(u,\delta)\\
&=
b^{-1/2}
\int
\frac{\delta}{\Phi(u)}\left\{\mathds{1}_{[0,x-by]}(u)-\mathds{1}_{[0,x]}(u)\right\}\,\mathrm{d}\sqrt{n}(\p_n-\p)(u,\delta)\\
&\qquad+
\sqrt{\frac{n}{b}}
\int \delta\left\{\mathds{1}_{[0,x-by]}(u)-\mathds{1}_{[0,x]}(u)\right\}
\left\{
\frac{1}{\Phi_n(u)}-\frac{1}{\Phi(u)}
\right\}\,\mathrm{d}\p_n(u,\delta).
\end{split}
\end{equation}
Here we took advantage of the representations in~\eqref{eqn:cum-haz}.
The  last term in the right hand side of~\eqref{eqn:distr2} is bounded in absolute value by
\[
\sqrt{\frac{n}{b}}
\frac{1}{\Phi_n(M)\,\Phi(M)}
\int \delta
\left|\mathds{1}_{[0,x-by]}(u)-\mathds{1}_{[0,x]}(u)\right|
\left|\Phi(u)-\Phi_n(u)\right|\,
\mathrm{d}\p_n(u,\delta)=o_P(1).
\]
Indeed, by using~\eqref{eqn:phi}, we obtain that $1/\Phi_n(M)=O_P(1)$ and then it suffices to prove that
\[
b^{-1/2}
\int
\delta\left|\mathds{1}_{[0,x-by]}(u)-\mathds{1}_{[0,x]}(u)\right|
\,\mathrm{d}\p_n(u,\delta)=o_P(1).
\]
To do so, we write the left hand side as
\begin{equation}
\label{eqn:decompose}
\begin{split}
&
b^{-1/2}\,\int \delta\left|\mathds{1}_{[0,x-by]}(u)-\mathds{1}_{[0,x]}(u)\right|\,\mathrm{d}\p(u,\delta)\\
&\quad+
b^{-1/2}\,\int \delta\left|\mathds{1}_{[0,x-by]}(u)-\mathds{1}_{[0,x]}(u)\right|\,\mathrm{d}(\p_n-\p)(u,\delta)\\
&=
b^{-1/2}\,\big|H^{uc}(x-by)-H^{uc}(x)\big|+O_p(b^{-1/2}n^{-1/2})
=
o_P(1).
\end{split}
\end{equation}
Here we have used that $H^{uc}$ is continuously differentiable
and that the class of indicators of intervals forms a VC-class, and is therefore Donsker
(see \citet{VW96}, Theorem 2.6.7 and Theorem 2.5.2).

The last step consists in showing that
\[
b^{-1/2}
\int
\frac{\delta}{\Phi(u)}\left\{\mathds{1}_{[0,x-by]}(u)-\mathds{1}_{[0,x]}(u)\right\}\,\mathrm{d}\sqrt{n}(\p_n-\p)(u,\delta)
\xrightarrow{d}
\sqrt{\frac{\lambda(x)}{1-H(x)}}\,W(y),
\]
where $W$ is a two sided Brownian motion. This follows from Theorem~2.11.23 in \citet{VW96}.
Indeed, we can consider the functions
\[
f_{n,y}(u,\delta)
=
b^{-1/2}\frac{\delta}{\Phi(u)}
\left\{
\mathds{1}_{[0,x-by]}(u)-\mathds{1}_{[0,x]}(u)
\right\},
\qquad y\in[-1,1].
\]
with envelopes $F_n(u,\delta)=b^{-1/2}\Phi(M)^{-1}\delta\mathds{1}_{[x-b,x+b]}(u)$.
It can be easily checked that
\[
\Vert F_n\Vert_{L_2(\p)}^2=\frac{1}{b\,\Phi^2(M)}\int_{x-b}^{x+b} f(u)(1-G(u))\,\mathrm{d}u=O(1),
\]
and that for all $\eta>0$,
\[
\frac{1}{b\Phi^2(M)}
\int_{\left\{u:b^{-1/2}\Phi(M)^{-1}\mathds{1}_{[x-b,x+b]}(u)>\eta\sqrt{n}\right\}}
f(u)(1-G(u))\,\mathrm{d}u
\to0.
\]
Moreover, for every sequence $\delta_n\downarrow0$, we have
\[
\frac{1}{b\Phi^2(M)}
\sup_{|s-t|<\delta_n}
\int_{x-b(s\vee t)}^{x-b(s\wedge t)}
f(u)(1-G(u))\,\mathrm{d}u
\to0.
\]
Since $f_{n,y}$ are sums and products of bounded monotone functions, the bracketing number is bounded
(see \citet{VW96}, Theorem 2.7.5)
\[
\log N_{[\,]}\left(\epsilon\Vert F_n\Vert_{L_2(\p)},\F_n, \Vert \cdot\Vert_{L_2(\p)}\right)
\lesssim
\log\left(1/\epsilon\,\Vert F_n\Vert_{L_2(\p)}\right).
\]
Hence, since $\Vert F_n\Vert_{L_2(\p)}$ is bounded we obtain
\[
\int_0^{\delta_n}\sqrt{\log N_{[\,]}\left(\epsilon\Vert F_n\Vert_{L_2(\p)},\F_n, \Vert \cdot\Vert_{L_2(\p)}\right)}\,\mathrm{d}\epsilon
\lesssim
\delta_n + \int_0^{\delta_n}\sqrt{\log(1/\epsilon)}\,\mathrm{d}\epsilon
\to0.
\]
Finally, as in~\eqref{eqn:decompose}, for any $s\in[-1,1]$,
\[
\p f_{n,s}
=
b^{-1/2}
\left\{
H^{uc}(x-bs)-H(x)
\right\}
\to0.
\]
Furthermore, for $0\leq s\leq t$,
\[
\p f_{n,s}f_{n,t}
=
b^{-1}
\int_{x-bs}^x\frac{f(u)(1-G(u))}{\Phi^2(u)}\,\mathrm{d}u
=
b^{-1}
\int_{x-bs}^x\frac{\lambda(u)}{1-H(u)}\,\mathrm{d}u
\to
\frac{\lambda(x)}{1-H(x)}s.
\]
Similarly, for $t\leq s\leq 0$, $\p f_{n,s}f_{n,t}\to -s\lambda(x)/(1-H(x))$,
whereas $\p f_{n,s}f_{n,t}=0$, for $st<0$.
It follows that
\begin{equation}
\label{def:limiting cov}
\p f_{n,s}f_{n,t}-\p f_{n,s}\p f_{n,t}
\to
\begin{cases}
\dfrac{\lambda(x)}{1-H(x)}(|s|\wedge|t|) & \text{, if }st\geq 0;\\
0& \text{, if }st<0.
\end{cases}
\end{equation}
Consequently, according to~Theorem~2.11.23 in \citet{VW96}, the sequence of stochastic processes
$\sqrt{n}(\p_n-\p)f_{n,y}$ converges in distribution to a tight Gaussian process $\mathbb{G}$ with mean zero
and covariance given on the right hand side of~\eqref{def:limiting cov}.
Note that this is the covariance function of $\sqrt{\lambda(x)/[1-H(x)]}W$, where $W$ is a two sided Brownian motion.
We conclude that
\[
\begin{split}
&
n^{2/5}\int_{x-b}^{x+b} k_b(x-u)\,\mathrm{d}(\Lambda_n-\Lambda)(u)\\
&\quad=
\frac{1}{\sqrt{bn^{1/5}}} \int_{-1}^1 k(y)\,\mathrm{d}\hat{W}_n(y)\\
&\quad\stackrel{d}{\to}
\left(\frac{\lambda(x)}{c(1-H(x))}\right)^{1/2}
\int_{-1}^1 k(y)\,\mathrm{d}W(y)
\stackrel{d}{=}
N\left(0,\frac{\lambda(x)}{c\,(1-H(x))}\int_{-1}^1 k^2(y)\,\mathrm{d}y\right).
\end{split}
\]
This proves the first part of the theorem.

The optimal $c$ is then obtained by minimizing
\[
\mathrm{AMSE}(\tilde{\lambda}^{SG} ,c)=\frac{1}{4}\,c^4\,\lambda''(x)^2\, \left(\int u^2\,k(u)\,\mathrm{d}u\right)^2+\frac{\lambda(x)}{c\,(1-H(x))}\,\int k^2(u)\,\mathrm{d}u
\]
with respect to $c$.
\end{proof}
This result is in line with Theorem 11.8 in~\citet{GJ14} on the asymptotic distribution of the SMLE under the same model,
which highlights the fact that even after applying a smoothing technique the MLE and the Grenander-type estimator remain asymptotically equivalent.

Standard kernel density estimators lead to inconsistency problems at the boundary.
In order to prevent those, different methods of boundary corrections can be used.
Here we consider boundary kernels and one possibility is to construct linear combinations of $k(u)$ and $uk(u)$ with coefficients depending on the value near the boundary (see~\citet{DGL13}; \citet{ZK98}).
To be precise, we define the smoothed Grenander-type estimator $\hat{\lambda}_n^{SG}$ by
\begin{equation}
\label{eqn:sg}
\hat{\lambda}_n^{SG}(x)=\int_{(x-b)\vee 0}^{(x+b)\wedge\tau_H} k_b^{(x)}(x-u)\,\tilde{\lambda}_n(u)\,\mathrm{d}u=\int_{(x-b)\vee 0}^{(x+b)\wedge\tau_H} k_b^{(x)}(x-u)\,\mathrm{d}\tilde{\Lambda}_n(u),\quad x\in[0,\tau_H],
\end{equation}
with $k_b^{(x)}(u)$ denoting the rescaled kernel $b^{-1}k^{(x)}(u/b)$ and
\begin{equation}
\label{eqn:bound-kernel}
k^{(x)}(u)=
\begin{cases}
\phi(\frac{x}{b})k(u)+\psi(\frac{x}{b})uk(u) & x\in[0,b],\\
k(u) & x\in(b,\tau_H-b)\\
\phi(\frac{\tau_H-x}{b})k(u)-\psi(\frac{\tau_H-x}{b})uk(u) & x\in[\tau_H-b,\tau_H],
\end{cases}
\end{equation}
where $k(u)$ is a standard kernel satisfying~\eqref{def:kernel}.
For $s\in[-1,1]$, the coefficients $\phi(s)$, $\psi(s)$ are determined by
\begin{equation}
\label{eqn:coefficient}
\begin{aligned}
&\phi(s)\int_{-1}^s k(u)\,\mathrm{d}u+\psi(s)\int_{-1}^s uk(u)\,\mathrm{d}u=1,\\
&\phi(s)\int_{-1}^s uk(u)\,\mathrm{d}u+\psi(s)\int_{-1}^s u^2k(u)\,\mathrm{d}u=0.
\end{aligned}
\end{equation}
Note that $\phi$ and $\psi$ are not only well defined, but they are also continuously differentiable if the kernel $k$ is assumed to be continuous (see \citet{DGL13}).
Furthermore, it can be easily seen that, for each $x\in[0,b]$, equations~\eqref{eqn:coefficient} lead to
\begin{equation}
\label{eqn:integral}
\int_{-1}^{x/b}k^{(x)}(u)\,\mathrm{d}u=1\qquad\text{and}\qquad\int_{-1}^{x/b}uk^{(x)}(u)\,\mathrm{d}u=0.
\end{equation}
In this case, we obtain a stronger uniform consistency result which is stated in the next theorem.
\begin{theo}
\label{theo:boundcorr}
Let $\hat{\lambda}_n^{SG}$ be defined by~\eqref{eqn:sg} and suppose that $\lambda$ is nondecreasing and uniformly continuous.
Assume that $k$ satisfies~\eqref{def:kernel} and is differentiable with a uniformly bounded derivative
and that $bn^{\alpha}\to c\in(0,\infty)$.
If $0<\alpha<1/2$, then for any $0<M<\tau_H$,
\[
\sup_{x\in[0,M]}
\left|\hat{\lambda}_n^{SG}(x)-\lambda(x)\right|
\to
0
\]
in probability.
\end{theo}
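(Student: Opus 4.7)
The plan is to split
\[
\hat{\lambda}_n^{SG}(x)-\lambda(x)
=\bigl(\hat{\lambda}_n^{SG}(x)-\lambda_n^s(x)\bigr)
+\bigl(\lambda_n^s(x)-\lambda(x)\bigr),
\]
where $\lambda_n^s(x)=\int k_b^{(x)}(x-u)\,\lambda(u)\,\mathrm{d}u$ is the ``noiseless'' smoothed version of $\lambda$, and to show uniform smallness of each piece on $[0,M]$ (deterministically for the second, in probability for the first). Fix $M'\in(M,\tau_H)$; for $n$ large we have $b<M'-M$, so every integrand below is supported in $[0,M']$, where the bound~\eqref{eqn:chbound} and the uniform continuity of $\lambda$ are available.

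For the deterministic bias I would change variables $v=(x-u)/b$, giving
\[
\lambda_n^s(x)=\int_{-1}^{(x/b)\wedge 1} k^{(x)}(v)\,\lambda(x-bv)\,\mathrm{d}v.
\]
The first identity in~\eqref{eqn:integral} (for $x\in[0,b]$), together with $\int_{-1}^1 k=1$ (for $x\in(b,M]$, where $k^{(x)}=k$), shows that this kernel integrates to one, so one can insert $\lambda(x)$ freely:
\[
\lambda_n^s(x)-\lambda(x)=\int k^{(x)}(v)\bigl[\lambda(x-bv)-\lambda(x)\bigr]\,\mathrm{d}v.
\]
Because $\phi,\psi$ are continuous on $[0,1]$ (hence bounded), the family $\{k^{(x)}\}_{x\in[0,M]}$ is uniformly bounded; combined with the uniform continuity of $\lambda$, this yields $\sup_{x\in[0,M]}|\lambda_n^s(x)-\lambda(x)|\to 0$.

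For the stochastic part I would integrate by parts. With $a=(x-b)\vee 0$ and $c=(x+b)\wedge\tau_H$,
\[
\hat{\lambda}_n^{SG}(x)-\lambda_n^s(x)
=\Bigl[k_b^{(x)}(x-u)(\tilde{\Lambda}_n-\Lambda)(u)\Bigr]_{u=a}^{u=c}
+\frac{1}{b^{2}}\int_a^c (k^{(x)})'\!\Bigl(\tfrac{x-u}{b}\Bigr)(\tilde{\Lambda}_n-\Lambda)(u)\,\mathrm{d}u.
\]
The boundary terms vanish: at $u=c$ we have $(x-c)/b=-1$ and $k(-1)=0$ (by continuity and compact support), hence $k^{(x)}(-1)=0$; at $u=a=x-b$ the same argument kills the term via $k^{(x)}(1)=0$; and at $u=a=0$ (which can occur only for $x\in[0,b]$) one uses $\tilde{\Lambda}_n(0)=\Lambda(0)=0$. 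Boundedness of $k,k'$ and continuity of $\phi,\psi$ make $(k^{(x)})'$ uniformly bounded in $x$ and in its argument. Since $|c-a|\le 2b$ and $\sup_{[0,M']}|\tilde{\Lambda}_n-\Lambda|=O_P(n^{-1/2})$ by~\eqref{eqn:chbound},
\[
\sup_{x\in[0,M]}\bigl|\hat{\lambda}_n^{SG}(x)-\lambda_n^s(x)\bigr|
=O_P(b^{-1}n^{-1/2})=O_P(n^{\alpha-1/2}),
\]
which tends to zero precisely because $\alpha<1/2$.

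The main obstacle is the boundary region $x\in[0,b]$, where the shape of $k_b^{(x)}$ genuinely depends on $x$: one has to invoke~\eqref{eqn:integral} (which is exactly what the choice of $\phi,\psi$ is designed to provide) to force the ``kernel integrates to one'' identity that drives the bias estimate, and one has to check that the extra integration-by-parts boundary term at $u=0$ disappears thanks to $\tilde{\Lambda}_n(0)=\Lambda(0)=0$. Away from the boundary the standard kernel and its compact support make both ingredients routine, so the whole argument hinges on these two observations near $x=0$.
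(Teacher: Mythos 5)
Your proposal is correct and follows essentially the same route as the paper's proof: the same decomposition into a deterministic bias term (controlled via~\eqref{eqn:integral} and uniform continuity of $\lambda$) and a stochastic term handled by integration by parts together with~\eqref{eqn:chbound}, yielding the bound $O_P(n^{\alpha-1/2})$. The only difference is presentational — the paper treats the cases $x\in[0,b]$ and $x\in(b,M]$ separately, while you handle them in one sweep and spell out the vanishing of the integration-by-parts boundary terms, which the paper leaves implicit.
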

\begin{proof}
For $x\in[0,M]$, write
\[
\hat{\lambda}_n^{SG}(x)-\lambda(x)
=
\left(\hat{\lambda}_n^{SG}(x)-\lambda_n^s(x)\right)
+
\Big(\lambda_n^{s}(x)-\lambda(x)\Big),
\]
where
\[
\lambda^s_n(x)=\int_{(x-b)\vee 0}^{(x+b)\wedge\tau_H} k_b^{(x)}(x-u)\,\lambda(u)\,\mathrm{d}u.
\]
We have to distinguish between two cases.
First, we consider the case $x\in[0,b]$.
By means of~\eqref{eqn:integral} and the fact that $\lambda$ is uniformly continuous, a change of variable yields
\begin{equation}
\label{eqn:cons2}
\sup_{x\in[0,b]}|\lambda^{s}_n(x)-\lambda(x)|
\leq
\sup_{x\in[0,b]}
\int_{-1}^{x/b} k^{(x)}(y)
\big|\lambda(x-by)-\lambda(x)\big|\,\mathrm{d}y
\to0.
\end{equation}
On the other hand, integration by parts and a change of variable give
\begin{equation}
\label{eqn:partcons2}
\begin{split}
\hat{\lambda}^{SG}_n(x)-\lambda^s_n(x)
&=
\int_{0}^{x+b}  k_b^{(x)}(x-u)\,\mathrm{d}\big(\tilde{\Lambda}_n-\Lambda\big)(u)\\
&=
-\int_0^{x+b}\frac{\partial}{\partial u}k_b^{(x)}(x-u)
\big(\tilde{\Lambda}_n(u)-\Lambda(u)\big)\,\mathrm{d}u\\
&=
\frac{1}{b}
\int_{-1}^{x/b}
\frac{\partial}{\partial y}k_b^{(x)}(y)
\big(\tilde{\Lambda}_n(x-by)-\Lambda(x-by)\big)\,\mathrm{d}y.
\end{split}
\end{equation}
Consequently, since for $n$ sufficiently large $x+b\leq M$, we obtain
\[
\sup_{x\in[0,b]}
\left|\hat{\lambda}^{SG}_n(x)-\lambda^s_n(x)\right|
\lesssim
\frac{1}{b}
\sup_{u\in[0,M]}
\left|
\tilde{\Lambda}_n(u)-\Lambda(u)
\right|
=O_p(n^{-1/2+\alpha}),
\]
because of~\eqref{eqn:chbound} and the boundedness of the coefficients $\phi$, $\psi$ and of $k(u)$ and $k'(u)$.
Together with~\eqref{eqn:cons2} and since $0<\alpha<1/2$, this proves that,
\[
\sup_{x\in[0,b]}
\left|\hat{\lambda}^{SG}_n(x)-\lambda(x)\right|
=o_P(1).
\]
When $x\in(b,M]$, for sufficiently large $n$, we have $0<x-b<x+b<\tau_H$, so that
by a change of variable and uniform continuity of $\lambda$,
it follows that
\begin{equation}
\label{eqn:cons1}
\sup_{x\in(b,M]}
\left|\lambda^{s}_n(x)-\lambda(x)\right|
\leq
\int_{-1}^{1} k(y)
|\lambda(x-by)-\lambda(x)|\,\mathrm{d}y
\to0.
\end{equation}
Furthermore,
\[
\hat{\lambda}^{SG}_n(x)
=
\int_{x-b}^{x+b} k_b(x-u)\,\tilde{\lambda}_n(u)\,\mathrm{d}u,
\]
so that, arguing as in~\eqref{eqn:partcons2}, we find that
\[
\sup_{x\in(b,M]}
\left|\hat{\lambda}^{SG}_n(x)-\lambda^s_n(x)\right|
=O_p(n^{-1/2+\alpha}),
\]
which, together with~\eqref{eqn:cons1}, proves that
\[
\sup_{x\in(b,M]}\left|\hat{\lambda}^{SG}_n(x)-\lambda(x)\right|
=o_P(1).
\]
This proves the theorem.
\end{proof}
The previous result illustrates that even if we use boundary kernels, we can not avoid inconsistency problems at the end point of the support.
Although a bit surprising, this is to be expected because we can only control the distance between the Nelson-Aalen estimator and the cumulative hazard on intervals that stay away from the right boundary (see~\eqref{eqn:chbound}).
\begin{figure}
\includegraphics[width=.6\textwidth]{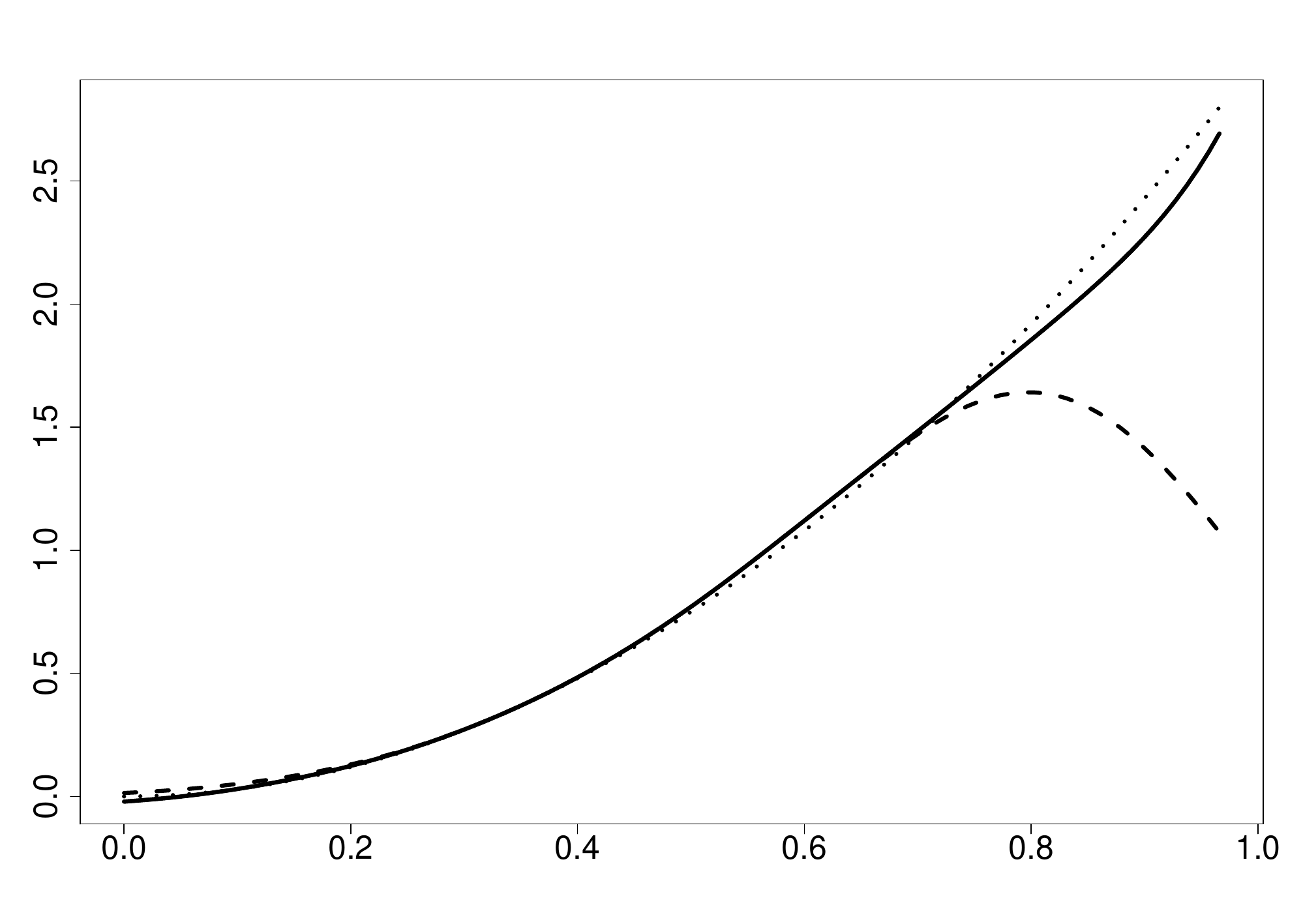}
\caption{The kernel smoothed versions (standard-dashed line and boundary corrected-solid line) of the hazard rate (dotted line).}
\label{fig:bound}
\end{figure}
Figure~\ref{fig:bound} illustrates that  boundary corrections improve the performance of the smooth estimator constructed with the standard kernel.
\section{Smoothed Grenander-type estimator of a monotone density}\label{sec:dens}
This section is devoted to the smoothed Grenander-type estimator $\tilde{f}_n^{SG}$ of an increasing density $f$. Let $k$ be a kernel function satisfying~\eqref{def:kernel}. For a fixed $x\in[0,\tau_H]$, $\tilde{f}_n^{SG}$ is defined by
\begin{equation}
\label{eqn:SGd}
\tilde{f}_n^{SG}(x)=\int_{(x-b)\vee 0}^{(x+b)\wedge\tau_H} k_b(x-u)\,\tilde{f}_n(u)\,\mathrm{d}u=\int k_b(x-u)\,\mathrm{d}\tilde{F}_n(u).
\end{equation}
We also consider the boundary corrected version $\hat{f}_n^{SG}$ of the smoothed Grenander-type estimator, defined by
\begin{equation}
\label{eqn:sgd}
\hat{f}_n^{SG}(x)=\int_{(x-b)\vee 0}^{(x+b)\wedge\tau_H} k_b^{(x)}(x-u)\,\tilde{f}_n(u)\,\mathrm{d}u=\int_{(x-b)\vee 0}^{(x+b)\wedge\tau_H} k_b^{(x)}(x-u)\,\mathrm{d}\tilde{F}_n(u),\quad x\in[0,\tau_H],
\end{equation}
with $k_b^{(x)}(u)$ as in~\eqref{eqn:bound-kernel}.
The following results can be proved in exactly the same way as Theorem~\ref{theo:cons} and Theorem~\ref{theo:boundcorr}.
\begin{theo}
\label{theo:cons-dens}
Let $k$ be a kernel function satisfying~\eqref{def:kernel} and let
$\tilde{f}^{SG}$ be the smoothed Grenander-type estimator defined in~\eqref{eqn:SGd}. Suppose that the density function $f$ is nondecreasing and continuous. Then for each $0<\epsilon<\tau_H$, it holds
\[
\sup_{x\in[\epsilon,\tau_H-\epsilon]}|\tilde{f}_n^{SG}(x)-f(x)|\to 0
\]
with probability one.
\end{theo}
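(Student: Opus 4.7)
The plan is to mirror the proof of Theorem~\ref{theo:cons} verbatim, replacing $\lambda$ by $f$, $\tilde{\lambda}_n$ by $\tilde{f}_n$, and invoking the pointwise consistency~\eqref{eqn:consGdens} in place of~\eqref{eqn:consG}. Fix $x\in(0,\tau_H)$, take $n$ so large that $0<x-b<x+b<\tau_H$, and split
\[
\tilde{f}_n^{SG}(x)-f(x)
=
\bigl(\tilde{f}_n^{SG}(x)-\tilde{f}_n^{s}(x)\bigr)
+
\bigl(\tilde{f}_n^{s}(x)-f(x)\bigr),
\]
with $\tilde{f}_n^{s}(x)=\int k_b(x-u)f(u)\,\mathrm{d}u$.

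For the deterministic term, the substitution $u=x-by$ gives
\[
\tilde{f}_n^{s}(x)-f(x)
=
\int_{-1}^{1}k(y)\{f(x-by)-f(x)\}\,\mathrm{d}y,
\]
which tends to zero by continuity of $f$ and dominated convergence (the integrand is bounded in absolute value by $2\|f\|_{\infty,[x-b_1,x+b_1]}\,k(y)$ for any fixed $b_1$ with $[x-b_1,x+b_1]\subset(0,\tau_H)$, and $f$ is bounded on such a compact interval since it is monotone and continuous).

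For the stochastic term, write
\[
\tilde{f}_n^{SG}(x)-\tilde{f}_n^{s}(x)
=
\int_{-1}^{1}k(y)\{\tilde{f}_n(x-by)-f(x-by)\}\,\mathrm{d}y,
\]
and, given $\epsilon>0$, choose $\delta>0$ with $0<x-\delta<x+\delta<\tau_H$ and $|f(x+\delta)-f(x-\delta)|<\epsilon$. For $n$ large enough $|by|\leq b<\delta$ uniformly in $y\in[-1,1]$, so that monotonicity of both $f$ and $\tilde{f}_n$ gives
\[
\tilde{f}_n(x-\delta)-f(x+\delta)
\leq
\tilde{f}_n(x-by)-f(x-by)
\leq
\tilde{f}_n(x+\delta)-f(x-\delta).
\]
Applying~\eqref{eqn:consGdens} to both endpoints, together with $\int k=1$, sandwiches $\limsup$ and $\liminf$ of $\tilde{f}_n^{SG}(x)-\tilde{f}_n^{s}(x)$ in $[-\epsilon,\epsilon]$ almost surely; sending $\epsilon\downarrow0$ yields a.s.\ pointwise convergence $\tilde{f}_n^{SG}(x)\to f(x)$ for every $x\in(0,\tau_H)$.

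To pass from pointwise to uniform convergence on $[\epsilon,\tau_H-\epsilon]$, I note that the convolution of a nondecreasing function with the symmetric probability density $k$ is itself nondecreasing, so each $\tilde{f}_n^{SG}$ is monotone. A sequence of monotone functions converging pointwise to a continuous monotone limit on a compact interval does so uniformly (the standard Pólya-type argument), which delivers the claim. The only step with any content is the sandwich argument above, and it goes through unchanged from the hazard case because~\eqref{eqn:consGdens} has exactly the same form as~\eqref{eqn:consG}; no additional obstacle is expected.
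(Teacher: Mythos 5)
Your proposal is correct and is exactly the paper's proof: the authors state that Theorem~\ref{theo:cons-dens} "can be proved in exactly the same way as Theorem~\ref{theo:cons}," i.e.\ by the same decomposition into a deterministic bias term handled via continuity and dominated convergence, the same monotone sandwich argument using~\eqref{eqn:consGdens} in place of~\eqref{eqn:consG}, and the same Pólya-type passage from pointwise to uniform convergence of monotone functions on a compact interval. Your explicit remark that $\tilde{f}_n^{SG}$ is itself nondecreasing on $[\epsilon,\tau_H-\epsilon]$ for $b<\epsilon$ is a small but welcome clarification of a step the paper leaves implicit.
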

\begin{theo}
\label{theo:boundcorr-dens}
Let $\hat{f}_n^{SG}$ be defined by~\eqref{eqn:sg} and suppose that $f$ is nondecreasing and uniformly continuous.
Assume that $k$ satisfies~\eqref{def:kernel} and is differentiable with uniformly bounded derivatives
and that $b\,n^{\alpha}\to c\in(0,\infty)$.
If $0<\alpha<1/2$, then for any $0<M<\tau_H$,
\[
\sup_{x\in[0,M]}
\left|\hat{f}_n^{SG}(x)-f(x)\right|
\to
0
\]
in probability.
\end{theo}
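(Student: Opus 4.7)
My plan is to mimic the proof of Theorem~\ref{theo:boundcorr} verbatim, with the cumulative hazard $\Lambda$ replaced by the distribution function $F$ and the Nelson--Aalen-based quantities replaced by their Kaplan--Meier analogues. The key observation is that the argument in Theorem~\ref{theo:boundcorr} uses only two ingredients about $\tilde{\Lambda}_n$: that it is the integral of $\tilde{\lambda}_n$, so integration by parts transfers a derivative onto the kernel, and the tail bound \eqref{eqn:chbound}. Here the corresponding ingredients are that $\tilde{F}_n$ is the integral of $\tilde{f}_n$ and the bound \eqref{eqn:cdbound}, both of which are available on any interval $[0,M]$ with $M<\tau_H$.

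Concretely, I would write
\[
\hat{f}_n^{SG}(x)-f(x)=\bigl(\hat{f}_n^{SG}(x)-f_n^s(x)\bigr)+\bigl(f_n^s(x)-f(x)\bigr),
\]
where $f_n^s(x)=\int_{(x-b)\vee 0}^{(x+b)\wedge\tau_H} k_b^{(x)}(x-u)\,f(u)\,\mathrm{d}u$, and then split into the boundary region $x\in[0,b]$ and the interior region $x\in(b,M]$. For the deterministic (bias) term, the boundary kernel identities~\eqref{eqn:integral} together with the uniform continuity of $f$ give, via a change of variable,
\[
\sup_{x\in[0,b]}|f_n^s(x)-f(x)|\leq \sup_{x\in[0,b]}\int_{-1}^{x/b}k^{(x)}(y)\,|f(x-by)-f(x)|\,\mathrm{d}y\to 0,
\]
and in the interior $(b,M]$ the standard kernel has mass one, so the same uniform-continuity argument used for \eqref{eqn:cons1} applies.

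For the stochastic term I would use exactly the integration-by-parts step from~\eqref{eqn:partcons2}, replacing $\tilde{\Lambda}_n-\Lambda$ by $\tilde{F}_n-F$: since $k_b^{(x)}$ is differentiable with uniformly bounded derivative (as $\phi$, $\psi$, $k$ and $k'$ are all bounded on the relevant compact set) and $x+b\leq M<\tau_H$ for $n$ large, we obtain
\[
\sup_{x\in[0,M]}\bigl|\hat{f}_n^{SG}(x)-f_n^s(x)\bigr|\lesssim \frac{1}{b}\sup_{u\in[0,M]}\bigl|\tilde{F}_n(u)-F(u)\bigr|=O_P(n^{-1/2+\alpha})
\]
by~\eqref{eqn:cdbound} and the assumption $bn^{\alpha}\to c$. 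Since $0<\alpha<1/2$, this is $o_P(1)$, and combining with the bias bound yields the claim.

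I do not expect any genuine obstacle: the only point where the hazard and density proofs really differ is which uniform tail bound is invoked (\eqref{eqn:chbound} versus \eqref{eqn:cdbound}), and both have the same $\sqrt{n}$-rate on intervals bounded away from $\tau_H$. The minor care needed is to confirm, as in Theorem~\ref{theo:boundcorr}, that the boundary kernel $k_b^{(x)}$ and its derivative are uniformly bounded in $x$ over $[0,b]$; this follows from the continuity of $\phi,\psi$ on the compact set $[-1,1]$ noted just after~\eqref{eqn:coefficient}.
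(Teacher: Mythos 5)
Your proposal is correct and is exactly the route the paper intends: the paper states that Theorem~\ref{theo:boundcorr-dens} "can be proved in exactly the same way as" Theorem~\ref{theo:boundcorr}, i.e.\ by substituting $\tilde{F}_n$, $F$ and the Kaplan--Meier bound~\eqref{eqn:cdbound} for $\tilde{\Lambda}_n$, $\Lambda$ and~\eqref{eqn:chbound}. Your identification of the two essential ingredients (integration by parts onto the boundary kernel and the $\sqrt{n}$-rate tail bound on $[0,M]$) matches the paper's argument.
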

\begin{figure}
\includegraphics[width=.6\textwidth]{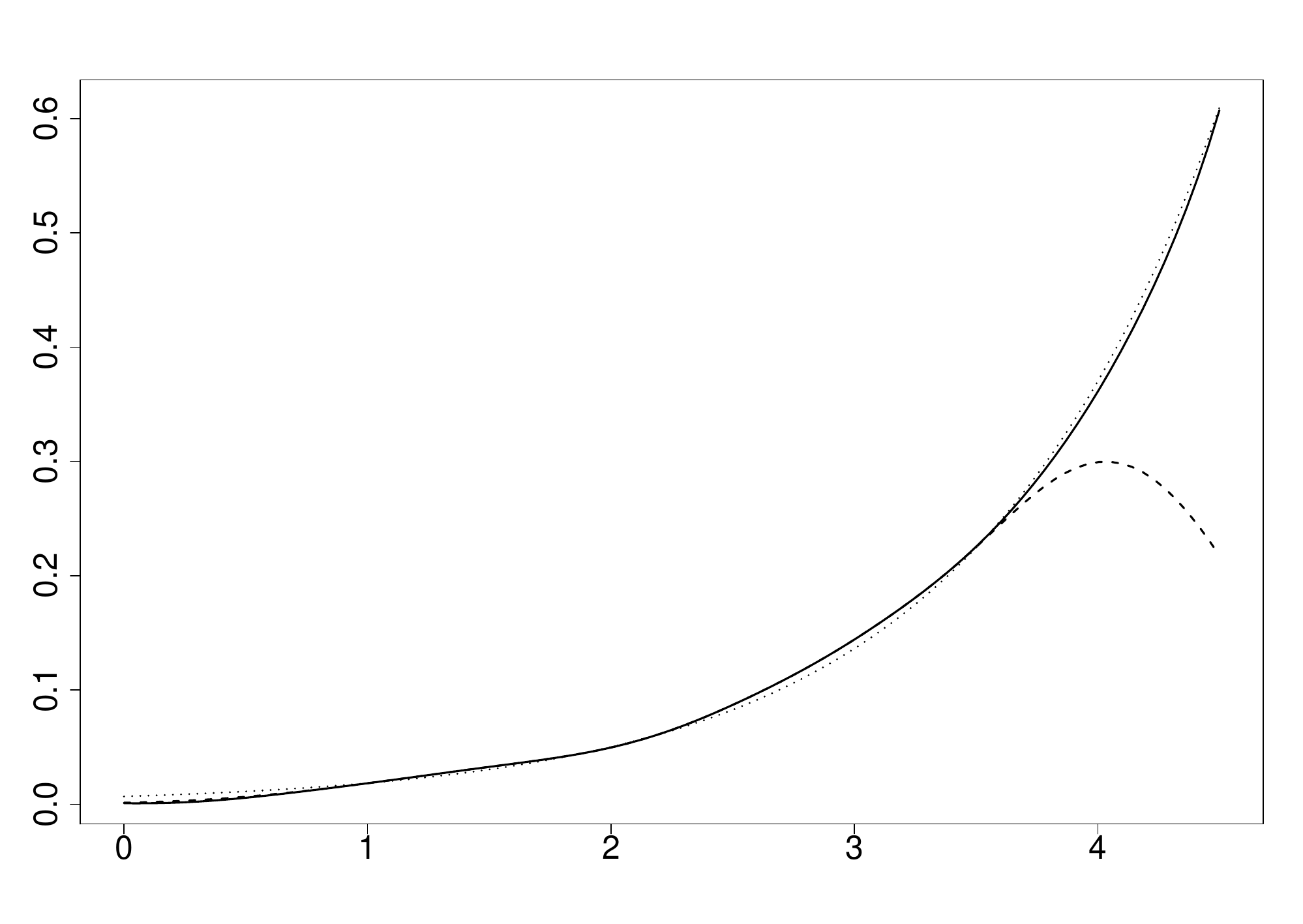}
\caption{The kernel smoothed versions (standard-dashed line and boundary corrected-solid line) of the density function (dotted line).}
\label{fig:bound-dens}
\end{figure}
Figure~\ref{fig:bound-dens} shows the smooth isotonic estimators of an increasing density {up to the $90\%$ quantile of $H$} for a sample of size $n=500$.
We choose {
\begin{equation}
\label{eqn:f}
f(x)=(e^5-1)^{-1}e^x\mathds{1}_{[0,5]}(x)
\end{equation}
and 
\begin{equation}
\label{eqn:g}
g(x)=(e^{5/2}-1)^{-1}e^{x/2}\mathds{1}_{[0,5]}(x)
\end{equation}
for the censoring times.}
The bandwidth used is $b=c_{opt}\,n^{-1/5}$, where $c_{opt}=4$ is the asymptotically MSE-optimal constant (see~\eqref{eqn:c_opt-dens})
corresponding to $x_0=2.5$.

In order to derive the asymptotic normality of the smoothed Grenander-type estimator $\tilde{f}_n^{SG}$ we first provide a Kiefer-Wolfowitz type of result for the Kaplan-Meier estimator.
\begin{lemma}
\label{le:KW}
Let $0<M<\tau_H$. Let $f$ be a nondecreasing and  continuously differentiable density such that $f$ and $f'$ are strictly positive.
Then we have
\[
\sup_{x\in[0,M]}|\tilde{F}_n(x)-F_n(x)|=O_P\left(\frac{\log n}{n}\right)^{2/3},
\]
where $F_n$ is the Kaplan-Meier estimator and $\tilde{F}_n$ is the greatest convex minorant of $F_n$.
\end{lemma}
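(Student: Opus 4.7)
The plan is to imitate the Kiefer--Wolfowitz-type argument of \citet{DL14} for the Nelson--Aalen estimator, with the strong approximation~\eqref{eqn:approx} here playing the role of the Hungarian embedding used there. Applying~\eqref{eqn:approx} with $x=C\log n$ for a sufficiently large constant $C$ yields, with probability tending to one, the decomposition
\[
Z_n(t):=F_n(t)-F(t)=n^{-1/2}(1-F(t))\,W(L(t))+R_n(t),\qquad t\in[0,M],
\]
where $\sup_{t\in[0,M]}|R_n(t)|=O_P(\log n/n)$. Since $M<\tau_H$, both $1-F$ and $L$ are $C^1$ and bounded away from zero on $[0,M]$, and $F$ is uniformly strictly convex there because $f'\geq c_0>0$.

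Fix $x\in[0,M]$ and let $[\tau_-,\tau_+]\subseteq[0,M]$ be the maximal interval containing $x$ on which $\tilde F_n$ is affine; then $\tilde F_n(\tau_\pm)=F_n(\tau_\pm)$. Writing $\theta=(\tau_+-x)/(\tau_+-\tau_-)$ and substituting $F_n=F+Z_n$ gives the key identity
\[
\bigl[F_n(x)-\tilde F_n(x)\bigr]+\bigl[\theta F(\tau_-)+(1-\theta)F(\tau_+)-F(x)\bigr]=Z_n(x)-\theta Z_n(\tau_-)-(1-\theta)Z_n(\tau_+).
\]
Both bracketed terms on the left are nonnegative, and a second-order Taylor expansion yields the quantitative convexity bound $\theta F(\tau_-)+(1-\theta)F(\tau_+)-F(x)\geq \tfrac12 c_0\,\theta(1-\theta)(\tau_+-\tau_-)^2$. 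Evaluating the identity at the midpoint $y$ of $[\tau_-,\tau_+]$ (where $\theta(1-\theta)=1/4$) reduces bounding the contact-interval length $\ell:=\tau_+-\tau_-$ to estimating the chordal deviation $|Z_n(y)-\tfrac12 Z_n(\tau_-)-\tfrac12 Z_n(\tau_+)|$; applying it at the original $x$ reduces the gap $F_n(x)-\tilde F_n(x)$ to the analogous chordal deviation at $x$.

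Substituting the strong approximation and using the Lipschitz continuity of $1-F$ and $L$ on $[0,M]$, both chordal deviations are bounded by $n^{-1/2}\Omega_n(\ell)+O_P(n^{-1/2}\ell)+O_P(\log n/n)$, where $\Omega_n(\ell):=\sup\{|W(L(t_1))-W(L(t_2))|:t_1,t_2\in[0,M],\,|t_1-t_2|\leq\ell\}$. Lévy's modulus of continuity for $W$, transferred through the smooth increasing bijection $L:[0,M]\to[0,L(M)]$, gives $\Omega_n(\ell)=O_P(\sqrt{\ell\log n})$. Combining with the convexity lower bound forces $c_0\ell^2\lesssim n^{-1/2}\sqrt{\ell\log n}+\log n/n$, whence $\ell=O_P((\log n/n)^{1/3})$; plugging back then yields $F_n(x)-\tilde F_n(x)=O_P((\log n/n)^{2/3})$, uniformly in $x\in[0,M]$.

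The main obstacle is that the triple $(\tau_-,x,\tau_+)$ is random, so the oscillation bound for $\Omega_n(\ell)$ must be uniform rather than pointwise; this is where Lévy's modulus, as opposed to plain Brownian scaling, is essential. A secondary technicality is that the contact interval may be clipped when $x$ is near $0$ or $M$, which is dealt with by working on a slightly larger interval $[0,M']\subset[0,\tau_H)$ from the outset and then restricting to $[0,M]$ at the end.
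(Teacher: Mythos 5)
Your argument is correct in its essentials, but it takes a genuinely different route from the paper. The paper does not redo the chord/convexity analysis at all: it invokes Theorem~2.2 of \citet{DL14} as a black box, taking the strong approximation~\eqref{eqn:approx} as the required embedding with limiting process $B(t)=(1-F(L^{-1}(t)))W(t)$ on $[0,L(M)]$, and then spends all of its effort verifying that this particular Gaussian process satisfies the modulus condition (A2) and the parabolic-drift tail condition (A3) of that theorem --- via time reversal, Brownian rescaling and the maximal inequality, precisely to absorb the Lipschitz perturbation $(1-F(L^{-1}(\cdot)))$ of $W$. You instead re-derive the Kiefer--Wolfowitz bound from scratch: the identity relating the gap $F_n-\tilde F_n$ to the convexity excess of $F$ plus the chordal deviation of $Z_n$, the quantitative convexity bound from $f'\geq c_0>0$, the midpoint evaluation to control the contact-interval length $\ell$, and L\'evy's modulus to get uniformity over the random triple $(\tau_-,x,\tau_+)$; this is essentially the internal mechanism of the DL14 proof specialized to a Brownian limit. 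What the paper's route buys is brevity and conditions already certified in \citet{DL14}; what yours buys is a self-contained argument that makes the origin of the rate $(\log n/n)^{2/3}$ (balancing $\ell^2$ against $n^{-1/2}\sqrt{\ell\log n}$) transparent, though it would not extend to the more general limit processes DL14 allow. The one place your sketch is not airtight is the clipping issue: enlarging $[0,M]$ to $[0,M']$ does not by itself prevent the contact point $\tau_+$ of some $x\leq M$ from exceeding $M'$, where~\eqref{eqn:approx} is unavailable; you need an extra step (e.g.\ a crude $O_P(n^{-1/2})$ bound on the chordal deviation from uniform consistency of the Kaplan--Meier estimator, which together with the convexity lower bound forces $\ell=o_P(1)$ and hence $\tau_+\leq M'$ with probability tending to one) to close this. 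Since the analogous boundary matching is also implicit in the paper's appeal to a theorem stated for the minorant on a compact interval, this is a fixable technicality rather than a flaw in your strategy.
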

\begin{proof}
We consider $f$ on the interval $[0,M]$ and apply Theorem~2.2 in~\citet{DL14}.
The density $f$ satisfies condition (A1) of this theorem with $[a,b]=[0,M]$.
Condition (2) of Theorem~2.2 in~\citet{DL14} is provided by
the strong approximation~\eqref{eqn:approx}, with $L$ defined in~\eqref{eqn:L}, $\gamma_n=O(n^{-1}\log n)^{2/3}$, and
\[
B(t)=\left(1-F(L^{-1}(t))\right)W(t),
\quad
t\in[L(0),L(M)]
\]
where $W$ is a Brownian motion.
It remains to show that $B$ satisfies conditions (A2)-(A3) of Theorem~2.2 in~\citet{DL14} with $\tau=1$.
In order to check these conditions for the process $B$, let
$x\in[L(0),L(M)]=[0,L(M)]$, $u\in(0,1]$ and $v>0$.
Then
\begin{equation}
\label{eq:bound A2}
\begin{split}
&
\p\left(
\sup_{|x-y|\leq u} |B(x)-B(y)|>v\right)\\
&\leq
\p\left(\sup_{|x-y|\leq u,\, y\in[0,L(M)]} |B(x)-B(y)|>v\right)\\
&\leq
2\p\left( \sup_{|x-y|\leq u} |W(x)-W(y)|>\frac{v}{3}\right)\\
&\qquad+
\p\left( \sup_{|x-y|\leq u,\, y\in[0,L(M)]} |F(L^{-1}(x))-F(L^{-1}(y)) |\,|W(x)|>\frac{v}{3}\right).
\end{split}
\end{equation}
Note that from the proof of Corollary 3.1 in~\citet{DL14} it follows that $W$ satisfies condition (A2) in~\citet{DL14}.
This means that there exist $K_1,K_2>0$, such that the first probability on the right hand side of~\eqref{eq:bound A2} is bounded by
$K_1\exp(-K_2v^2u^{-1})$.
Furthermore, since $u\in(0,1]$ and
\[
|F(L^{-1}(x))-F(L^{-1}(y)) |\leq \frac{\sup_{u\in[0,M]}f(u)}{\inf_{u\in[0,M]}L'(u)}\,|x-y|\leq \frac{\sup_{u\in[0,M]}f(u)}{\inf_{u\in[0,M]}\lambda(u)}\,|x-y|,
\]
the second probability on the right hand side of~\eqref{eq:bound A2} is bounded by
\[
\p\left(|W(x)|>\frac{v}{K_3\sqrt{u}}\right)
\leq
\p\left(\sup_{t\in[0,L(M)]}|W(t)|>\frac{v}{K_3\sqrt{u}}\right),
\]
for some $K_3>0$.
Hence, by the maximal inequality for Brownian motion, we conclude that
there exist $K_1',K_2'>0$ such that
\[
\p\left(
\sup_{|x-y|\leq u} |B(x)-B(y)|>v\right)
\leq
K_1'\exp\left(-K_2'v^2u^{-1}\right)
\]
which proves condition (A2) in~\citet{DL14}.

Let us now consider (A3).
For all $x\in[0,L(M)]$, $u\in(0,1]$, and $v>0$, we obtain
\begin{equation}
\label{eq:bound A3}
\begin{split}
&
\p\left( \sup_{u\leq z\leq x}
\left\{B(x-z)-B(x)-vz^2\right\}>0\right)\\
&\leq
\p\left( \sup_{u\leq z\leq x}
\left\{W(x-z)-W(x)-\frac{vz^2}{3}\right\}>0\right)\\
&\quad+
\p\left( \sup_{u\leq z\leq x} \left\{F(L^{-1}(x-z))[W(x)-W(x-z)]-\frac{vz^2}{3}\right\}>0\right)\\
&\qquad+
\p\left(\sup_{u\leq z\leq x}
\left\{
\left(F(L^{-1}(x))-F(L^{-1}(x-z))\right)W(x)-\frac{vz^2}{3}
\right\}>0\right).
\end{split}
\end{equation}
Again, from the proof of Corollary 3.1 in~\citet{DL14} it follows that $W$ satisfies condition (A3) in~\citet{DL14},
which means that there exist $K_1,K_2>0$, such that the first probability on the right hand side of~\eqref{eq:bound A3} is bounded by
$K_1\exp(-K_2v^2u^{3})$.
We establish the same bound for the remaining tow probabilities.
By the time reversal of the Brownian motion, the process $W'(z)=W(x)-W(x-z)$ is also a Brownian motion on the interval $[u,x]$.
Then, using the change of variable $u/z=t$ and the fact that $\widetilde{W}(t)=tu^{-1/2}W'(u/t)$, for $t>0$, is again a Brownian motion,
the second probability on the right hand side of~\eqref{eq:bound A3} is bounded by
\begin{equation}
\label{eq:bound A3 term2}
\p\left(\sup_{t\in(0,1]} \frac{t}{\sqrt{u}}\left|W'\left(\frac{u}{t}\right)\right|>\frac{vu^{3/2}}{3t}\right)
=
\p
\left(
\sup_{t\in[0,1]}
|\widetilde{W}(t)|>\frac{vu^{3/2}}{3}
\right).
\end{equation}
Finally,
\[
\sup_{u\leq z\leq x} \left|\frac{F(L^{-1}(x))-F(L^{-1}(x-z))}{z}\right|\leq \frac{\sup_{u\in[0,M]}f(u)}{\inf_{u\in[0,M]}\lambda(u)},
\]
so that the third probability on the right hand side of~\eqref{eq:bound A3} is bounded by
\begin{equation}
\label{eq:bound A3 term3}
\begin{split}
&
\p\left(\sup_{u\leq z\leq x} \left|\frac{F(L^{-1}(x))-F(L^{-1}(x-z))}{z}\right|\,|W(x)|>\frac{vu}{3}\right)\\
&\qquad\leq
\p\left(|W(x)|>\frac{vu^{3/2}}{K_3}\right)
\leq
\p\left(\sup_{t\in[0,L(M)]}|W(t)|>\frac{vu^{3/2}}{K_3}\right),
\end{split}
\end{equation}
for some $K_3>0$.
By applying the maximal inequality for Brownian motion to the right hand sides of~\eqref{eq:bound A3 term2} and~\eqref{eq:bound A3 term3},
we conclude that there exist $K_1',K_2'>0$, such that
\[
\p\left( \sup_{u\leq z\leq x} \{B(x-z)-B(x)-vz^2\}>0\right)
\leq K'_1\exp(-K'_2v^2u^{3}),
\]
which proves condition (A3) in~\citet{DL14}.
\end{proof}
\begin{theo}
\label{theo:distr-dens}
Let $f$ be a nondecreasing and twice continuously differentiable density such that $f$ and $f'$ are strictly positive. Let $k$ satisfy~\eqref{def:kernel} and suppose that it is differentiable with a uniformly bounded derivative. If $bn^{1/5}\to c\in(0,\infty)$, then for each $x\in(0,\tau_h)$, it holds
\[
n^{2/5}\,\big(\,\tilde{f}_n^{SG}(x)-f(x)\,\big)\xrightarrow{d}N(\mu,\sigma^2),
\]
where
\begin{equation}
\label{eqn:mu-sigma-dens}
\mu=\frac{1}{2}\,c^2\,f''(x)\, \int u^2\,k(u)\,\mathrm{d}u\quad\text{ and }\quad \sigma^2= \frac{f(x)}{c\,(1-G(x))}\,\int k^2(u)\,\mathrm{d}u.
\end{equation}
For a fixed $x\in(0,\tau_h)$, the asymptotically MSE-optimal bandwidth $b$ for $\tilde{\lambda}^{SG}$ is given by
$c_{opt}(x)n^{-1/5}$, where
\begin{equation}
\label{eqn:c_opt-dens}
c_{opt}=\left\{f(x)\int k^2(u)\,\mathrm{d}u \right\}^{1/5}\left\{(1-G(x))f''(x)^2 \left(\int u^2\,k(u)\,\mathrm{d}u\right)^2\right\}^{-1/5}.
\end{equation}
\end{theo}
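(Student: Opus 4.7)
The plan is to follow the template of the proof of Theorem~\ref{theo:distr}, with Lemma~\ref{le:KW} replacing Corollary~3.4 in~\citet{DL14} and the strong approximation~\eqref{eqn:approx} replacing the Donsker-type argument for the empirical process. Fix $x\in(0,\tau_H)$, choose $M<\tau_H$ with $x+b\leq M$ for all large $n$, and decompose
\[
\tilde{f}_n^{SG}(x)=\int k_b(x-u)\,\mathrm{d}F(u)+\int k_b(x-u)\,\mathrm{d}(F_n-F)(u)+\int k_b(x-u)\,\mathrm{d}(\tilde{F}_n-F_n)(u).
\]
The first (deterministic) term produces the bias $\mu$ by the change of variable $u=x-by$ and a second-order Taylor expansion of $f$, using $\int k=1$ and $\int yk(y)\,\mathrm{d}y=0$. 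For the Grenander-correction term, integration by parts together with Lemma~\ref{le:KW} gives
\[
n^{2/5}\left|\int k_b(x-u)\,\mathrm{d}(\tilde{F}_n-F_n)(u)\right|\lesssim \frac{n^{2/5}}{b}\sup_{u\in[0,M]}\left|\tilde{F}_n(u)-F_n(u)\right|=O_P\bigl(n^{-1/15}(\log n)^{2/3}\bigr)=o_P(1).
\]

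The main term $n^{2/5}\int k_b(x-u)\,\mathrm{d}(F_n-F)(u)$ is handled by inserting the strong approximation~\eqref{eqn:approx}, which replaces $F_n(u)-F(u)$ by $n^{-1/2}(1-F(u))W(L(u))$ uniformly on $[0,M]$ with an error of order $n^{-1}\log n$. After one integration by parts, this error contributes $O_P(n^{2/5}b^{-1}n^{-1}\log n)=o_P(1)$, leaving the Gaussian leading term
\[
n^{-1/10}\int k_b(x-u)\,\mathrm{d}\bigl[(1-F(u))\,W(L(u))\bigr].
\]
Splitting this differential as $-f(u)W(L(u))\,\mathrm{d}u+(1-F(u))\,\mathrm{d}W(L(u))$, the Lebesgue part is $O_P(n^{-1/10})$, hence negligible, while the martingale part is a centred Gaussian random variable whose variance, by the It\^{o} isometry and the change of variable $u=x-by$, equals
\[
n^{-1/5}b^{-1}\int_{-1}^{1}k^2(y)(1-F(x-by))^2\frac{\lambda(x-by)}{1-H(x-by)}\,\mathrm{d}y\longrightarrow \frac{1}{c}\,\frac{f(x)}{1-G(x)}\int k^2(y)\,\mathrm{d}y,
\]
using $bn^{1/5}\to c$ and the identity $(1-F(x))^2\lambda(x)/(1-H(x))=f(x)/(1-G(x))$, which follows from $\lambda=f/(1-F)$ and $1-H=(1-F)(1-G)$.

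Combining the three pieces yields the stated asymptotic normality, and $c_{opt}(x)$ is then obtained by minimizing the resulting AMSE in $c$, exactly as in Theorem~\ref{theo:distr}. The principal technical step is the variance computation for the Gaussian stochastic integral; in contrast to Theorem~\ref{theo:distr}, no Donsker/VC bookkeeping is needed here, because the strong approximation~\eqref{eqn:approx} already supplies a direct Gaussian coupling for the Kaplan--Meier process, which is precisely why Lemma~\ref{le:KW} was established beforehand.
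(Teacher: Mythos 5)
Your proposal is correct and follows the same skeleton as the paper's proof: the same three-term decomposition, Lemma~\ref{le:KW} to dispose of the $\mathrm{d}(\tilde{F}_n-F_n)$ term, and the strong approximation~\eqref{eqn:approx} to reduce the empirical term to a Brownian functional (the paper likewise uses no Donsker/VC argument in this proof). The one genuine divergence is the final step. The paper centres at $x$, observes that $y\mapsto b^{-1/2}\bigl(W\circ L(x-by)-W\circ L(x)\bigr)$ is distributed as $\widetilde{W}\bigl((L(x)-L(x-by))/b\bigr)$, shows this converges uniformly to $\widetilde{W}(L'(x)y)$, and separately kills the remainder $b^{-1/2}(F(x)-F(x-by))W\circ L(x-by)$; you instead keep the leading term as a Wiener integral with deterministic integrand $(1-F(u))k_b(x-u)$ against the Gaussian martingale $W\circ L$, so that it is \emph{exactly} Gaussian for every $n$ and only its variance needs a limit, computed by the It\^{o} isometry. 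Your route is slightly more economical for the one-dimensional marginal (the finite-variation piece you discard as $O_P(n^{-1/10})$ absorbs what the paper handles as a separate remainder), while the paper's route yields process-level convergence of $\hat{W}_n$, mirroring the argument used for the hazard in Theorem~\ref{theo:distr}. All your order assessments check out: $n^{2/5}b^{-1}(n^{-1}\log n)^{2/3}=O(n^{-1/15}(\log n)^{2/3})$ for the Grenander correction, $n^{2/5}b^{-1}n^{-1}\log n=O(n^{-2/5}\log n)$ for the coupling error, and the identity $(1-F)^2\lambda/(1-H)=f/(1-G)$ giving the stated $\sigma^2$.
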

\begin{proof}
Fix $x\in(0,\tau_H)$. Then, for sufficiently large  $n$, we have $0<x-b< x+b\leq M<\tau_H.$ Following the proof of~\ref{theo:distr}, we write
\begin{equation}
\label{eqn:distr1-dens}
\tilde{f}^{SG}_n(x)=\int k_b(x-u)\,\mathrm{d}F(u)+\int k_b(x-u)\,\mathrm{d}(F_n-F)(u)+\int k_b(x-u)\,\mathrm{d}(\tilde{F}_n-F_n)(u).
\end{equation}
Again the first (deterministic) term on the right hand side of~\eqref{eqn:distr1-dens} gives us the asymptotic bias:
\[
n^{2/5}
\left\{
\int_{x-b}^{x+b} k_b(x-u)f(u)\,\mathrm{d}u-f(x)
\right\}
\to
\frac{1}{2}c^2f''(x)
\int_{-1}^1 y^2 k(y)\,\mathrm{d}y,
\]
and by the Kiefer-Wolfowitz type of result in Lemma~\ref{le:KW}, the last term on the right hand side of~\eqref{eqn:distr1-dens} converges to $0$ in probability. What remains is to prove that
\[
n^{2/5}\,\int k_b(x-u)\,\mathrm{d}(F_n-F)(u)\xrightarrow{d} N(0,\sigma^2),
\]
where $\sigma^2$ is defined in~\eqref{eqn:mu-sigma-dens}. We write
\[
n^{2/5}\int_{x-b}^{x+b} k_b(x-u)\,\mathrm{d}(F_n-F)(u)=\frac{1}{\sqrt{b\,n^{1/5}}} \int_{-1}^1 k(y)\,\mathrm{d}\hat{W}_n(y),
\]
where, for each $y\in[-1,1]$, we define
\begin{equation}
\label{eqn:distr2-dens}
\begin{split}
\hat{W}_n(y)
&=
\sqrt{\frac{n}{b}}
\Big\{F_n(x-by)-F_n(x)-F(x-by)+F(x)\Big\}\\
&=
\sqrt{\frac{n}{b}}
\Big\{F_n(x-by)-F(x-by)-n^{-1/2}(1-F(x-by))W\circ L(x-by)\Big\}\\
&\qquad-
\sqrt{\frac{n}{b}}
\left\{F_n(x)-F(x)-n^{-1/2}(1-F(x))W\circ L(x)\right\}\\
&\qquad+
\frac{1}{\sqrt{b}}
\Big(1-F(x)\Big)\Big\{W\circ L(x-by)-W\circ L(x)\Big\}\\
&\qquad+
\frac{1}{\sqrt{b}}\Big(F(x)-F(x-by)\Big)W\circ L(x-by).
\end{split}
\end{equation}
Using the strong approximation~\eqref{eqn:approx}, we obtain
\[
\begin{split}
&
\p\left(\sup_{u\in[0,M]}\sqrt{\frac{n}{b}}\left|F_n(u)-F(u)-n^{-1/2}(1-F(u))W\circ L(u)\right|>\epsilon\right)\\
&\qquad\leq
K_1\exp\left\{-K_2(\epsilon\sqrt{nb}-K_3\log n)\right\}\to 0,
\end{split}
\]
and it then follows that the first two terms on the right hand side of~\eqref{eqn:distr2-dens} converge to $0$ in probability uniformly in $y$.
For the last term, we get
\[
\begin{split}
&
\p\left(\sup_{y\in[-1,1]}
\left|\frac{1}{\sqrt{b}}\Big(F(x)-F(x-by)\Big)W\circ L(x-by)\right|>\epsilon\right)\\
&\qquad\leq
\p\left(\sqrt{b}\sup_{u\in[0,M]}f(u)\sup_{u\in [0,\Vert L\Vert_\infty]}|W(u)|>\epsilon\right)
\leq K_1\exp\left(-\frac{K_2\epsilon^2}{b}\right)\to 0.
\end{split}
\]
For the third term on the right hand side of~\eqref{eqn:distr2-dens}, note
that $y\mapsto b^{-1/2}(W\circ L(x-by)-W\circ L(x))$, for $y\in[-1,1]$, has the same distribution as the process
\begin{equation}
\label{eqn:process}
y\mapsto \widetilde{W}\left(\frac{L(x)-L(x-by)}{b}\right),
\quad
\text{for }y\in[-1,1],
\end{equation}
where $\widetilde{W}$ is a two-sided Brownian motion.
By uniform continuity of the two-sided Brownian motion on compact intervals,
the sequence of stochastic processes in~\eqref{eqn:process} converges to the process
$\{\widetilde{W}\left(L'(x)y\right):y\in[-1,1]\}$:
\[
\sup_{y\in[-1,1]}\left|\widetilde{W}\left(\frac{L(x)-L(x-by)}{b}\right)-\widetilde{W}\left(L'(x)y\right)\right|\xrightarrow{\p} 0.
\]
As a result
\[
\begin{split}
\frac{1}{\sqrt{bn^{1/5}}}
\int_{-1}^1 k(y)\,\mathrm{d}\hat{W}_n(y)
&\xrightarrow{d}
\sqrt{\frac{f(x)}{c(1-G(x))}}
\int_{-1}^1 k(y)\,\mathrm{d}\widetilde{W}(y)\\
&\sim
N\left(0,\frac{f(x)}{c(1-G(x))}\int_{-1}^1 k^2(y)\,\mathrm{d}y \right).
\end{split}
\]
The optimal $c$ is then obtained by minimizing
\[
\mathrm{AMSE}(\tilde{f}^{SG} ,c)=\frac{1}{4}c^4f''(x)^2
\left(\int u^2\,k(u)\,\mathrm{d}u\right)^2
+
\frac{f(x)}{c(1-G(x))}
\int k^2(u)\,\mathrm{d}u,
\]
with respect to $c$.
\end{proof}
\section{Pointwise confidence intervals}
\label{sec:conf-int}
In this section we construct pointwise confidence intervals for the hazard rate and the density based on the asymptotic distributions derived in
Theorem~\ref{theo:distr} and Theorem~\ref{theo:distr-dens} and compare them to confidence intervals constructed using Grenander-type estimators without smoothing.
According to Theorem~2.1 and Theorem~2.2 in~\citet{huang-wellner1995}, for a fixed $x_0\in(0,\tau_H)$,
\[
n^{1/3}\left|\frac{1-G(x_0)}{4f(x_0)f'(x_0)}\right|^{1/3}\left(\tilde{f}_n(x_0)-f(x_0)\right)
\xrightarrow{d}
\Z,
\]
and
\[
n^{1/3}\left|\frac{1-H(x_0)}{4\lambda(x_0)\lambda'(x_0)}\right|^{1/3}\left(\tilde{\lambda}_n(x_0)-\lambda(x_0)\right)
\xrightarrow{d}
\Z,
\]
where $W$ is a two-sided Brownian motion starting from zero
and $\Z=\argmin_{t\in\R}\left\{W(t)+t^2\right\}$.
This yields $100(1-\alpha)\%$-confidence intervals for $f(x_0)$ and $\lambda(x_0)$ of the following form
\[
C^1_{n,\alpha}
=
\tilde{f}_n(x_0)\pm n^{-1/3}\hat{c}_{n,1}(x_0)q(\Z,1-\alpha/2),
\]
and
\[
C^2_{n,\alpha}
=
\tilde{\lambda}_n(x_0)\pm n^{-1/3}\hat{c}_{n,2}(x_0)q(\Z,1-\alpha/2),
\]
where $q(\Z,1-\alpha/2)$ is the $(1-\alpha/2)$ quantile of the distribution $\Z$ and
\[
\hat{c}_{n,1}(x_0)
=
\left|\frac{4\tilde{f}_n(x_0)\tilde{f}_n'(x_0)}{1-G_n(x_0)}\right|^{1/3},
\qquad
\hat{c}_{n,2}(x_0)
=
\left|\frac{4\tilde{\lambda}_n(x_0)\tilde{\lambda}'_n(x_0)}{1-H_n(x_0)}\right|^{1/3}.
\]
Here, $H_n$ is the empirical distribution function of $T$ and in order to avoid the denominator taking the value zero,
instead of the natural estimator of $G$, we consider a slightly different version as in~\citet{MP87}:
\begin{equation}
\label{eqn:G_n}
G_n(t)
=
\begin{cases}
0 & \text{ if } 0\leq t< T_{(1)},\\
\displaystyle{1-\prod_{i=1}^{k-1}\left(\frac{n-i+1}{n-i+2} \right)^{1-\Delta_i}} & \text{ if } T_{(k-1)}\leq t< T_{(k)},\quad k=2,\dots,n,\\
\displaystyle{1-\prod_{i=1}^{n}\left(\frac{n-i+1}{n-i+2} \right)^{1-\Delta_i}} & \text{ if } t\geq T_{(n)}.
\end{cases}
\end{equation}
Furthermore, as an estimate for $\tilde{f}_n'(x_0)$ we choose $(\tilde{f}_n(\tau_m)-\tilde{f}_n(\tau_{m-1}))/(\tau_m-\tau_{m-1})$,
where $\tau_{m-1}$ and $\tau_m$ are two succeeding points of jump of $\tilde{f}_n$ such that $x_0\in(\tau_{m-1},\tau_m]$,
and~$\tilde{\lambda}_n'(x_0)$ is estimated similarly.
The quantiles of the distribution $\Z$ have been computed in~\citet{GW01} and we will use $q(\Z,0.975)=0.998181$.

The pointwise confidence intervals based on the smoothed Grenander-type estimators are constructed from
Theorem~\ref{theo:distr} and Theorem~\ref{theo:distr-dens}.
We find
\[
\widetilde{C}^1_{n,\alpha}
=
\tilde{f}^{SG}_n(x_0)\pm n^{-2/5}
\left(
\hat{\sigma}_{n,1}(x_0)q_{1-\alpha/2}+\hat{\mu}_{n,1}(x_0)
\right),
\]
and
\[
\widetilde{C}^2_{n,\alpha}
=
\tilde{\lambda}^{SG}_n(x_0)\pm
n^{-2/5}(\hat{\sigma}_{n,2}(x_0)q_{1-\alpha/2}+\hat{\mu}_{n,2}(x_0)),
\]
where $q_{1-\alpha/2}$ is the $(1-\alpha/2)$ quantile of the standard normal distribution.
The estimators $\hat{\sigma}_{n,1}(x_0)$ and $\hat{\mu}_{n,1}(x_0)$ are obtained by plugging $\tilde{f}^{SG}_n$ and its second derivative for $f$ and $f''$, respectively,
and $G_n$ and $c_{opt}(x_0)$ for $G$ and $c$, respectively, in~\eqref{eqn:mu-sigma-dens}, and similarly $\hat{\sigma}_{n,2}(x_0)$ and $\hat{\mu}_{n,2}(x_0)$
are obtained from~\eqref{eqn:mu-sigma}.
Estimating the bias seems to be a hard problem because it depends on the second derivative of the function of interest.
As discussed, for example in~\citet{Hall92}, one can estimate the bias by using a bandwidth of a different order for estimating the second derivative or one
can use undersmoothing (in that case the bias is zero and we do not need to estimate the second derivative).
We tried both methods and it seems that undersmoothing performs better, which is in line with other results available in the literature (see for instance,~\citet{Hall92};~\citet{GJ15};~\citet{CHT06}).

When estimating the hazard rate,
we choose a Weibull distribution with shape parameter $3$ and scale parameter $1$ for the event times and the uniform distribution on~$(0,1.3)$ for the censoring times.
Confidence intervals are calculated at the point $x_0=0.5$ using 1000 sets of data and the bandwidth in the case of undersmoothing is $b=c_{opt}(x_0)n^{-1/4}$,
where $c_{opt}(x_0)=1.2$.
In the case of bias estimation we use $b=c_{opt}(x_0)n^{-5/17}$ to estimate the hazard and $b_1=c_{opt}(x_0) n^{-1/17}$ to estimate its second derivative (as suggested in~\citet{Hall92}).

Table~\ref{tab:1} shows the performance, for various sample sizes, of the confidence intervals based on the asymptotic distribution (AD) of the Grenander-type estimator and of the smoothed Grenander estimator (for both undersmoothing and bias estimation).
\begin{table}
\begin{tabular}{ccccccc}
 \toprule
      & \multicolumn{2}{c}{Grenander}      &   \multicolumn{2}{c}{SG-undersmoothing} &   \multicolumn{2}{c}{SG-bias estimation}  \\
n     & AL    & CP    & AL    & CP    & AL    & CP     \\
100   & 0.930  & 0.840  & 0.648 & 0.912 & 0.689 & 0.955  \\
500   & 0.560  & 0.848 & 0.366 & 0.948 & 0.383 & 0.975  \\
1000  & 0.447 & 0.847 & 0.283 & 0.954 & 0.295 & 0.977  \\
5000  & 0.255 & 0.841  & 0.155 & 0.953 & 0.157 & 0.978  \\
\bottomrule
\end{tabular}
\caption{The average length (AL) and the coverage probabilities (CP) for $95\%$ pointwise confidence intervals of the hazard rate at the point $x_0=0.5$ based on the asymptotic distribution. }
\label{tab:1}
\end{table}
The poor performance of the Grenander-type estimator seems to be related to the crude estimate of the derivative of the hazard with the slope of the correspondent segment.
On the other hand, it is obvious that smoothing leads to significantly better results in terms of both average length and coverage probabilities.
As expected, when using undersmoothing, as the sample size increases we get shorter confidence intervals and coverage probabilities that are closer to the nominal level of $95\%$.
By estimating the bias, we obtain coverage probabilities that are  higher than $95\%$,
because the confidence intervals are bigger compared to the average length when using undersmoothing.
\begin{figure}
\centering
\subfloat[][]
{\includegraphics[width=.48\textwidth]{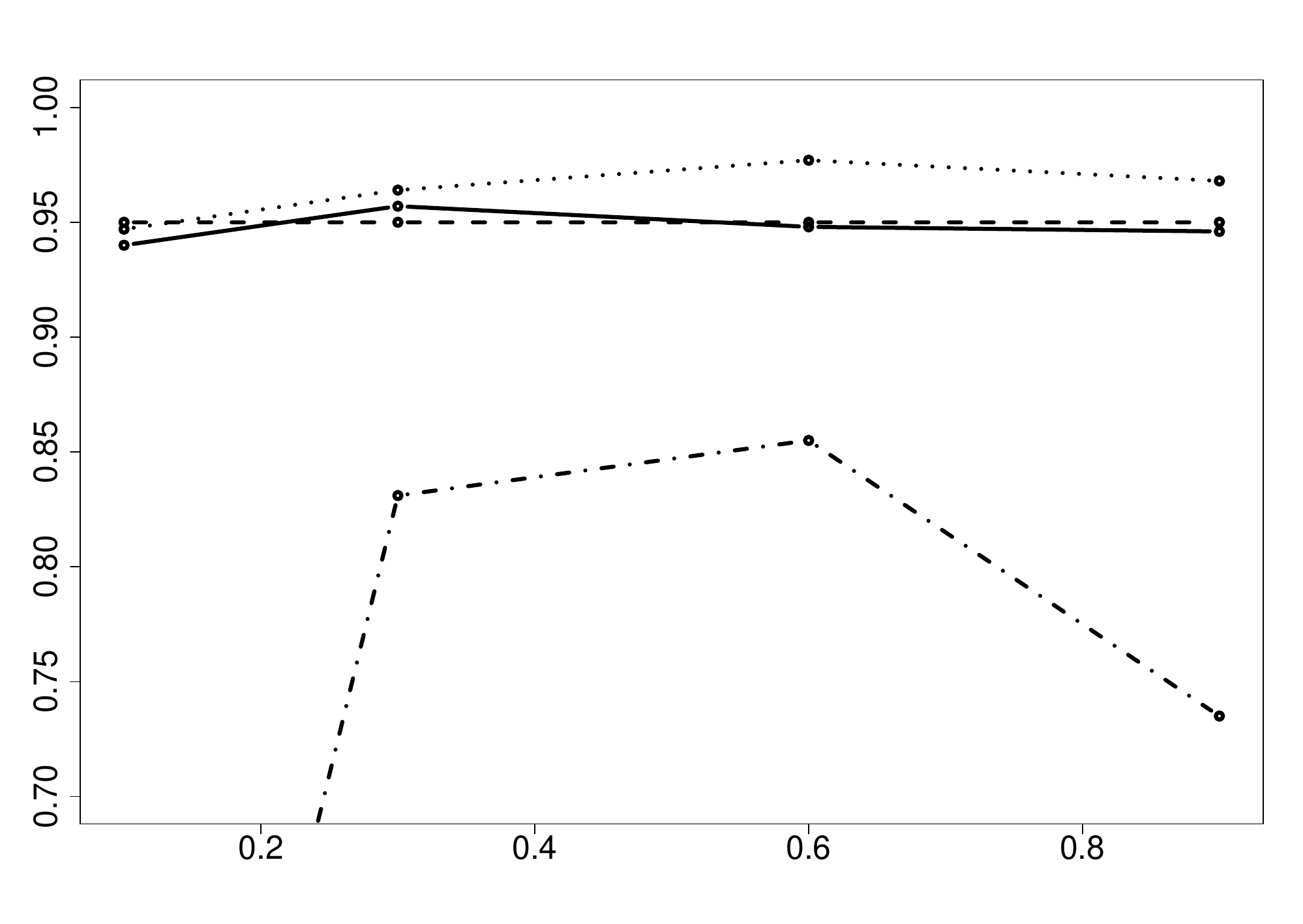}} \quad
\subfloat[][]
{\includegraphics[width=.48\textwidth]{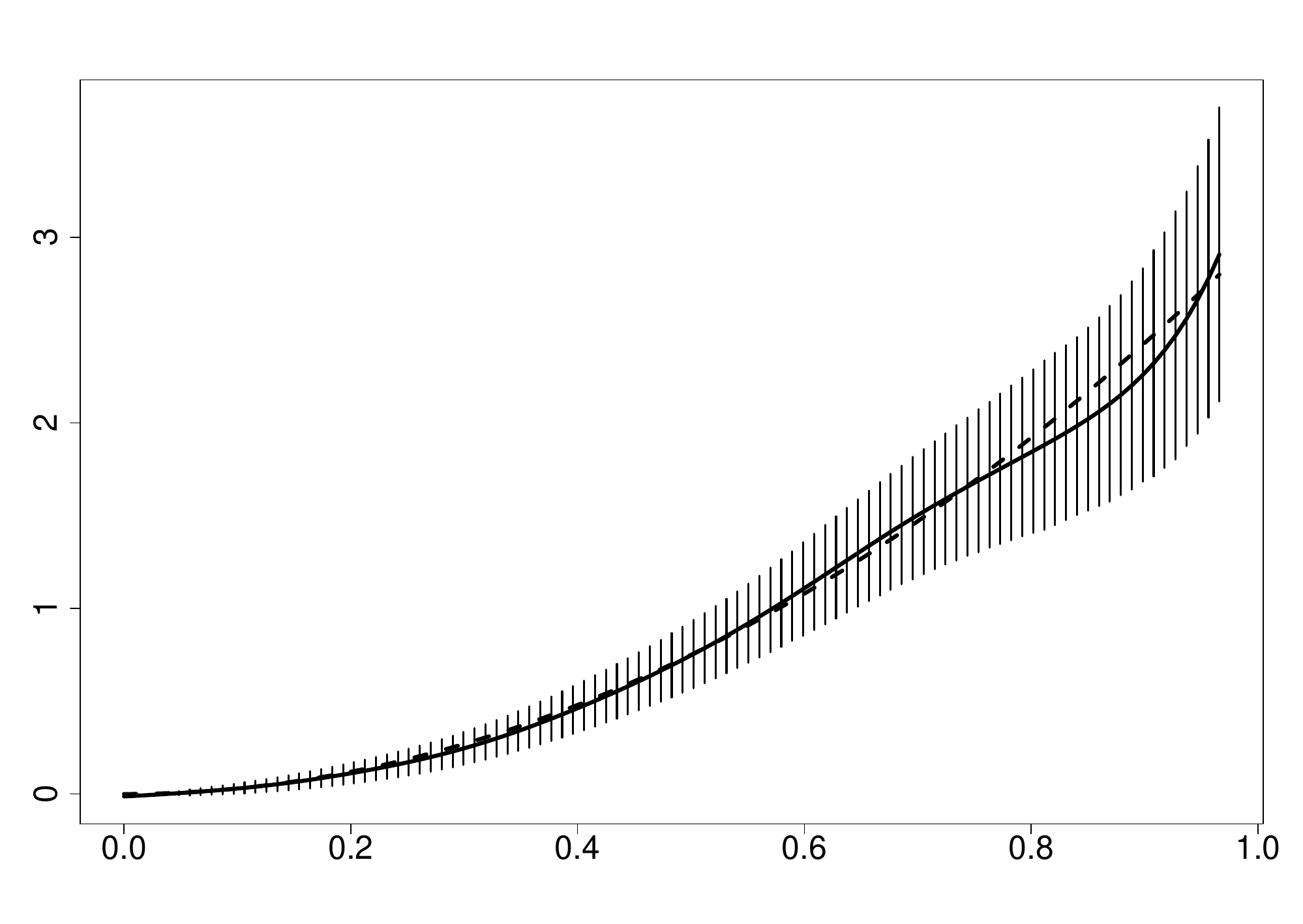}}
\caption{Left panel: Actual coverage of confidence intervals with nominal coverage $95\%$ for the hazard rate based on the asymptotic distribution. Dashed line-nominal level; dotdashed line-Grenander-type; solid line-SG undersmoothing; dotted line-SG bias estimation. Right panel: $95\%$ confidence intervals based on the asymptotic distribution for the hazard rate using undersmoothing.}
\label{fig:subfig4}
\end{figure}
Another way to compare the performance of the different methods is to take
a fixed sample size $n=500$ and different points of the support of the hazard function.
Figure~\ref{fig:subfig4} shows that confidence intervals based on undersmoothing behave well also at the boundaries in terms of coverage probabilities,
but the length increases as we move to the left end point of the support.
In order to maintain good visibility of the performance of the smooth estimators, we left out the poor performance of the Grenander estimator at point $x=0.1$.

{It is also of interest to compare our confidence intervals with other competing methods, in particular with those obtained via the inversion of the likelihood ratio statistic proposed in \citet{MB08}. We consider their setting of simulation A.2 (heavy cencoring case), where the event times come from a Weibull distribution with shape parameter $2$ and scale $\sqrt{2}$ and the censoring times are uniform in $(0,1.5)$. Again, we recorded the average length of nominal $95\%$ confidence intervals for the hazard rate at the point $x_0=\sqrt{2\log 2}$ obtained using undersmoothing and their coverage probabilities. The results for various sample sizes and choices of the constant $c$ in the definition of the bandwidth are displayed in Table~\ref{tab:3}. These observations show that the performance of the confidence intervals strongly depends on the choice of $c$. Most of the time the confidence intervals are shorter compared to those using likelihood ratio and for $c=1$ or $c=1.2$ our method produces also better coverage probabilities. On the other hand, if $c$ is too small, e.g. $c=0.8$, the confidence intervals become quite conservative (or anticonservative for large $c$). Of course the likelihood ratio method has the advantage of not requiring estimation of nuisance parameters or choosing the bandwidth but our results confirm that with the right choice of the bandwidth smooth estimation performs much better. The importance of the choice of the smoothing parameter is well-known and different methods has been proposed in the literature to find the optimal one (see for example \citet{CHT06} and \citet{GCM96}). However, it is beyond the scope of this paper to investigate methods of bandwidth selection.
\begin{table}
\begin{tabular}{ccccccccc}
 \toprule
     & \multicolumn{2}{c}{LR}  & \multicolumn{2}{c}{c=0.8}      &   \multicolumn{2}{c}{c=1} &   \multicolumn{2}{c}{c=1.2}  \\
n   & AL & CP  & AL    & CP    & AL    & CP    & AL    & CP     \\
50   & 3.110 & 0.911 & 2.951 & 0.962 & 2.700 & 0.946 & 2.463 & 0.925  \\
100  & 2.408 & 0.917 & 2.213 & 0.964 & 1.991 & 0.951 & 1.820 & 0.935  \\
200  & 1.684 & 0.929 & 1.680 & 0.972 & 1.512 & 0.947 & 1.392 & 0.930  \\
500  & 1.073 & 0.932 & 1.202 & 0.970 & 1.070 & 0.952 & 0.981 & 0.927  \\
1000 & 0.782 & 0.936 & 0.935 & 0.975 & 0.836 & 0.958 & 0.764 & 0.936  \\
1500 & 0.653 & 0.941 & 0.809 & 0.982 & 0.720 & 0.965 & 0.663 & 0.944  \\
\bottomrule
\end{tabular}
\caption{The average length (AL) and the coverage probabilities (CP) for $95\%$ pointwise confidence intervals of the hazard rate at the point $x_0=\sqrt{2\log 2}$ using likelihood ratio (LR) and undersmoothing with various choices of $c$ .}
\label{tab:3}
\end{table}
}
Finally, we consider estimation of the density.
We simulate the event times and the censoring times from {the density functions in~\eqref{eqn:f} and~\eqref{eqn:g}.}
Confidence intervals are calculated at the point $x_0=2.5$ using 1000 sets of data and the bandwidth in the case of undersmoothing is $b=c_{opt}(2.5)n^{-1/4}$,
where $c_{opt}(2.5)=4$.
When estimating the bias we use $b=c_{opt}(2.5)n^{-5/17}$ to estimate the hazard and $b_1=c_{opt}(2.5) n^{-1/17}$ to estimate its second derivative (as suggested in~\citet{Hall92}).
Table~\ref{tab:2} shows the performance, for various sample sizes, of the confidence intervals based on the asymptotic distribution (AD)
of the Grenander-type estimator and of the smoothed Grenander estimator (for both undersmoothing and bias estimation).
\begin{table}
\begin{tabular}{ccccccc}
 \toprule
      & \multicolumn{2}{c}{Grenander}      &   \multicolumn{2}{c}{SG-undersmoothing} &   \multicolumn{2}{c}{SG-bias estimation}  \\
n     & AL    & CP    & AL    & CP    & AL    & CP     \\
50    & 0.157 & 0.822 & 0.129 & 0.948 & 0.136 & 0.929 \\
100   & 0.127 & 0.856 & 0.101 & 0.954 & 0.109 & 0.959  \\
500   & 0.073 & 0.859 & 0.056 & 0.971 & 0.064  & 0.979  \\
1000  & 0.058 & 0.864 & 0.043 & 0.979 & 0.050 & 0.976  \\
5000  & 0.032 & 0.845 & 0.023 & 0.965 & 0.029 & 0.965  \\
\bottomrule
\end{tabular}
\caption{The average length (AL) and the coverage probabilities (CP) for $95\%$ pointwise confidence intervals of the density function at the point $x_0=2.5$ based on the asymptotic distribution. }
\label{tab:2}
\end{table}
Again, confidence intervals based on the Grenander-type estimator have a poor coverage.
On the other hand, by considering the smoothed version, we usually obtain high coverage probabilities. 
Again, undersmoothing behaves slightly better.
The performance of these three methods for a fixed sample size $n=500$ and different points of the support of the density is illustrated in Figure~\ref{fig:subfig5}.
\begin{figure}
\centering
\subfloat[][]
{\includegraphics[width=.48\textwidth]{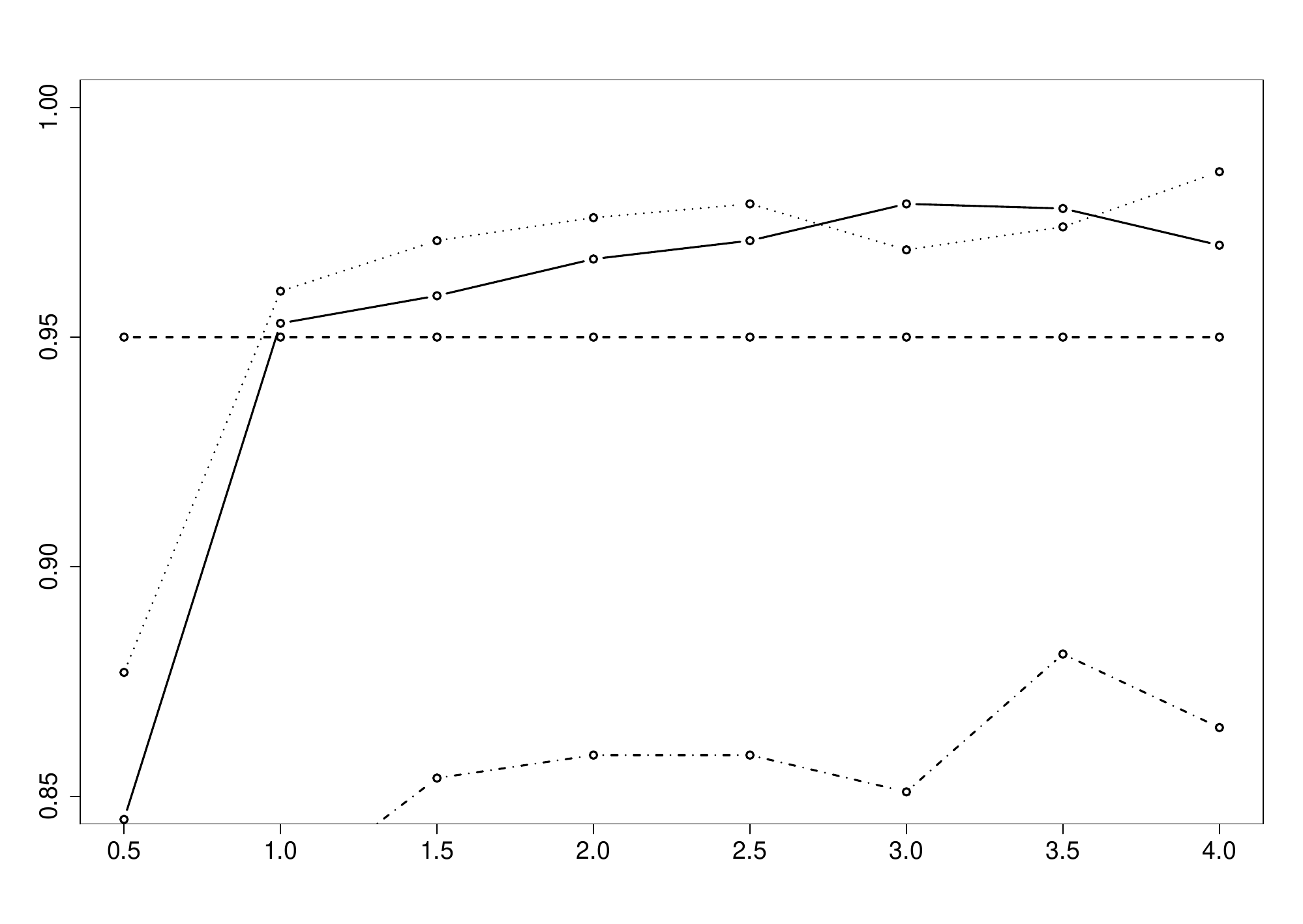}} \quad
\subfloat[][]
{\includegraphics[width=.48\textwidth]{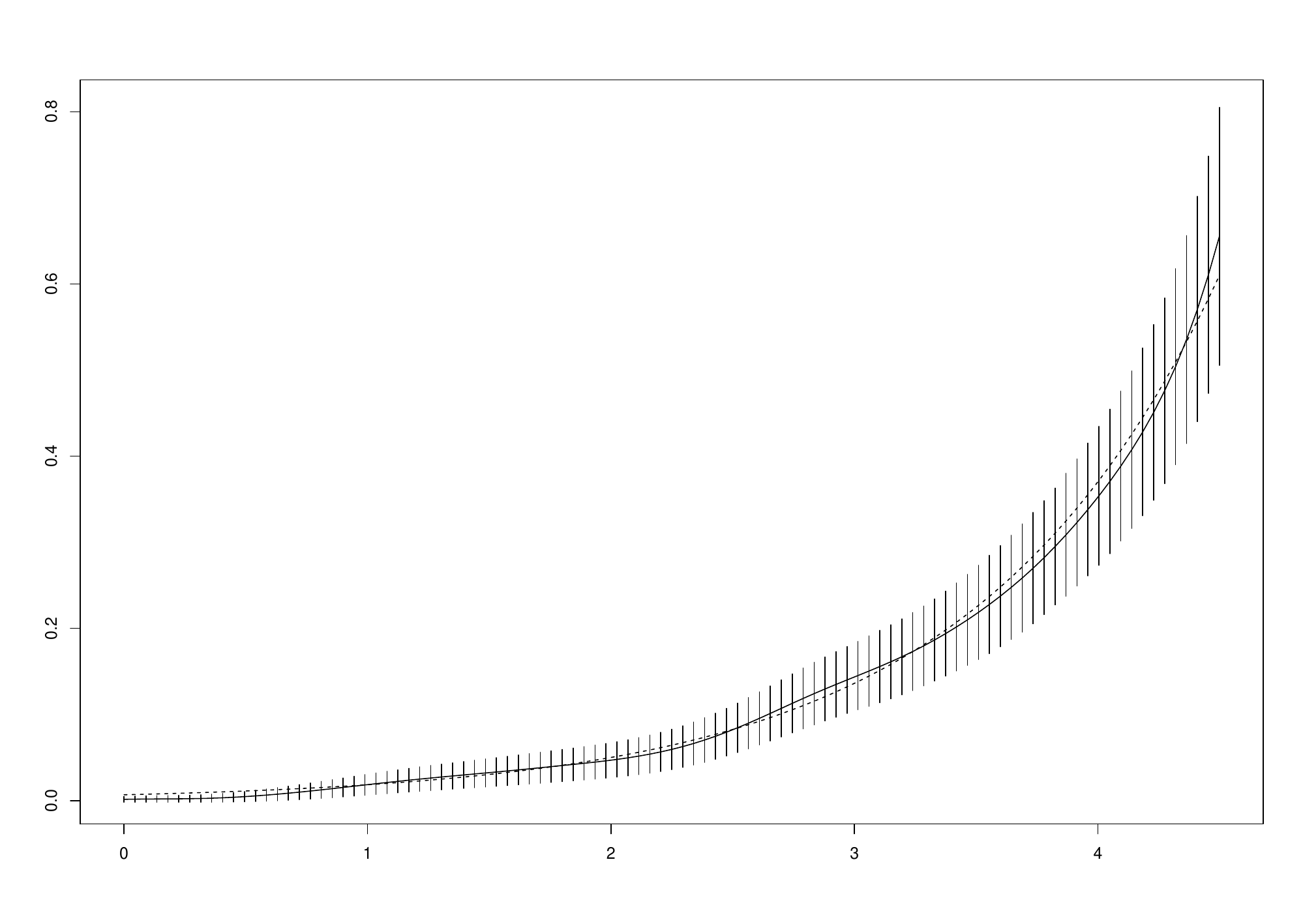}}
\caption{Left panel: Actual coverage of confidence intervals with nominal coverage $95\%$ for the density function based on the asymptotic distribution. Dashed line-nominal level; dotdashed line-Grenander-type; solid line-SG undersmoothing; dotted line-SG bias estimation. Right panel: $95\%$ confidence intervals based on the asymptotic distribution for the density function using undersmoothing.}
\label{fig:subfig5}
\end{figure}

\section{Discussion}\label{sec:discussion}
In the present paper, we have considered kernel smoothed Grenander estimators for a monotone hazard and a monotone density under right censoring.
We have established uniform strong convergence of the estimators in the interior of the support of distribution of the follow-up times
and  asymptotic normality at a rate of convergence of $n^{2/5}$.
The behavior of the estimators has been illustrated in a small simulation study, where it can be seen that the smoothed versions perform better than the ordinary Grenander estimators.
The proof of asymptotic normality is more or less straightforward thanks to a Kiefer-Wolfowitz type of result provided in \citet{DL14}.

The right censoring model is a special case of the Cox regression model, where in addition one can consider various covariates.
A natural question then is whether the previous approach for proving the asymptotic normality of a smoothed Grenander-type estimator, for example, for the hazard rate can be extended to such a more general setting.
Unfortunately, no Kiefer-Wolfowitz nor an embedding into the Brownian motion is available for the Breslow estimator, being the natural naive estimator for the cumulative hazard.
Recently, \citet{GJ14} developed different approach to establish asymptotic normality of smoothed isotonic estimators,
which is mainly based on uniform $L_2$-bounds for the distance between the non-smoothed estimator and the true function.
This approach seems to have more potential for generalizing our results to the Cox model.
However, things will not go smoothly, because the presence of covariates makes it more difficult to obtain bounds on the tail probabilities
for the inverse process involved and in the Cox model one has to deal with a rather complicated martingale associated with the Breslow estimator.
This is beyond the scope of this paper, but will be the topic of future research.

\bibliography{shapeconstrained-estimation}

\def\cprime{$'$}
\begin{thebibliography}{26}
\expandafter\ifx\csname natexlab\endcsname\relax\def\natexlab#1{#1}\fi
\expandafter\ifx\csname url\endcsname\relax
  \def\url#1{\texttt{#1}}\fi
\expandafter\ifx\csname urlprefix\endcsname\relax\def\urlprefix{URL }\fi

\bibitem[{Albers(2012)}]{Albers}
Albers, M.~G., 2012. Boundary Estimation of Densities with Bounded Support.
  Master's thesis. ETH Z\"urich.

\bibitem[{Banerjee(2008)}]{MB08}
Banerjee, M., 2008. Estimating monotone, unimodal and {U}-shaped failure rates
  using asymptotic pivots. Statist. Sinica 18~(2), 467--492.

\bibitem[{Breslow and Crowley(1974)}]{BC74}
Breslow, N., Crowley, J., 1974. A large sample study of the life table and
  product limit estimates under random censorship. The Annals of Statistics 2,
  437--453.

\bibitem[{Cheng et~al.(2006)Cheng, Hall, and Tu}]{CHT06}
Cheng, M.~Y., Hall, P., Tu, D., 2006. Confidence bands for hazard rates under
  random censorship. Biometrika 93~(2), 357--366.

\bibitem[{Durot(2007)}]{durot2007}
Durot, C., 2007. On the {$\Bbb L_p$}-error of monotonicity constrained
  estimators. The Annals of Statistics 35~(3), 1080--1104.

\bibitem[{Durot et~al.(2013)Durot, Groeneboom, and Lopuha{\"a}}]{DGL13}
Durot, C., Groeneboom, P., Lopuha{\"a}, H.~P., 2013. Testing equality of
  functions under monotonicity constraints. Journal of Nonparametric Statistics
  25~(4), 939--970.

\bibitem[{Durot and Lopuha{\"a}(2014)}]{DL14}
Durot, C., Lopuha{\"a}, H.~P., 2014. A {K}iefer-{W}olfowitz type of result in a
  general setting, with an application to smooth monotone estimation.
  Electronic Journal of Statistics 8~(2), 2479--2513.

\bibitem[{Gonz{\'a}lez-Manteiga et~al.(1996)Gonz{\'a}lez-Manteiga, Cao, and
  Marron}]{GCM96}
Gonz{\'a}lez-Manteiga, W., Cao, R., Marron, J.~S., 1996. Bootstrap selection of
  the smoothing parameter in nonparametric hazard rate estimation. Journal of
  the American Statistical Association 91~(435), 1130--1140.

\bibitem[{Groeneboom and Jongbloed(2013)}]{GJ13}
Groeneboom, P., Jongbloed, G., 2013. Smooth and non-smooth estimates of a
  monotone hazard. In: From probability to statistics and back:
  high-dimensional models and processes. Vol.~9. Inst. Math. Statist.,
  Beachwood, OH, pp. 174--196.

\bibitem[{Groeneboom and Jongbloed(2014)}]{GJ14}
Groeneboom, P., Jongbloed, G., 2014. Nonparametric estimation under shape
  constraints. Vol.~38. Cambridge University Press.

\bibitem[{Groeneboom and Jongbloed(2015)}]{GJ15}
Groeneboom, P., Jongbloed, G., 2015. Nonparametric confidence intervals for
  monotone functions. The Annals of Statistics 43~(5), 2019--2054.

\bibitem[{Groeneboom et~al.(2010)Groeneboom, Jongbloed, and Witte}]{GJW10}
Groeneboom, P., Jongbloed, G., Witte, B.~I., 2010. Maximum smoothed likelihood
  estimation and smoothed maximum likelihood estimation in the current status
  model. The Annals of Statistics 38~(1), 352--387.

\bibitem[{Groeneboom and Wellner(2001)}]{GW01}
Groeneboom, P., Wellner, J.~A., 2001. Computing {C}hernoff's distribution.
  Journal of Computational and Graphical Statistics 10~(2), 388--400.

\bibitem[{Hall(1992)}]{Hall92}
Hall, P., 1992. Effect of bias estimation on coverage accuracy of bootstrap
  confidence intervals for a probability density. The Annals of Statistics
  20~(2), 675--694.

\bibitem[{Huang and Wellner(1995)}]{huang-wellner1995}
Huang, J., Wellner, J.~A., 1995. Estimation of a monotone density or monotone
  hazard under random censoring. Scandinavian Journal of Statistics. Theory and
  Applications 22~(1), 3--33.

\bibitem[{Kosorok(2008)}]{kos2008}
Kosorok, M.~R., 2008. Bootstrapping the {G}renander estimator. In: Beyond
  parametrics in interdisciplinary research: {F}estschrift in honor of
  {P}rofessor {P}ranab {K}. {S}en. Vol.~1. Inst. Math. Statist., Beachwood, OH,
  pp. 282--292.

\bibitem[{Lopuha{\"a} and Nane(2013)}]{LopuhaaNane2013}
Lopuha{\"a}, H.~P., Nane, G.~F., 2013. Shape constrained non-parametric
  estimators of the baseline distribution in {C}ox proportional hazards model.
  Scandinavian Journal of Statistics. Theory and Applications 40~(3), 619--646.

\bibitem[{Major and Rejt{\H{o}}(1988)}]{MR88}
Major, P., Rejt{\H{o}}, L., 1988. Strong embedding of the estimator of the
  distribution function under random censorship. The Annals of Statistics
  16~(3), 1113--1132.

\bibitem[{Mammen(1991)}]{mammen1991}
Mammen, E., 1991. Estimating a smooth monotone regression function. The Annals
  of Statistics 19~(2), 724--740.

\bibitem[{Marron and Padgett(1987)}]{MP87}
Marron, J.~S., Padgett, W.~J., 1987. Asymptotically optimal bandwidth selection
  for kernel density estimators from randomly right-censored samples. The
  Annals of Statistics 15~(4), 1520--1535.

\bibitem[{Mukerjee(1988)}]{HM88}
Mukerjee, H., 1988. Monotone nonparameteric regression. Ann. Statist. 16~(2),
  741--750.

\bibitem[{Nane(2013)}]{Nane}
Nane, G.~F., 2013. Shape constrained nonparametric estimation in the cox model.

\bibitem[{Nane(2015)}]{nane15}
Nane, G.~F., 2015. A likelihood ratio test for monotone baseline hazard
  functions in the cox model. Statistica Sinica 25, 1163--1184.

\bibitem[{Sen et~al.(2010)Sen, Banerjee, and Woodroofe}]{SBW}
Sen, B., Banerjee, M., Woodroofe, M., 2010. Inconsistency of bootstrap: the
  {G}renander estimator. The Annals of Statistics 38~(4), 1953--1977.

\bibitem[{van~der Vaart and Wellner(1996)}]{VW96}
van~der Vaart, A.~W., Wellner, J.~A., 1996. Weak convergence and empirical
  processes. Springer Series in Statistics. Springer-Verlag, New York.

\bibitem[{Zhang and Karunamuni(1998)}]{ZK98}
Zhang, S., Karunamuni, R.~J., 1998. On kernel density estimation near
  endpoints. Journal of Statistical Planning and Inference 70~(2), 301--316.

\end{thebibliography}

\end{document}